\newtheorem{theorem}{Theorem}
\newtheorem*{theorem*}{Theorem}
\numberwithin{theorem}{subsection}
\numberwithin{theorem}{section}
\newtheorem{lemma}{Lemma}
\numberwithin{lemma}{section}
\newtheorem{proposition}{Proposition}
\numberwithin{proposition}{section}
\numberwithin{corollary}{section}
\numberwithin{equation}{section}
\numberwithin{equation}{subsection}
\newcommand{\R}{\mathbb{R}}
\newcommand{\J}{\mathcal{J}}
\titleformat*{\section}{\large}
\titleformat{\subsection}[runin]
  {\normalfont\normalsize}{\thesubsection}{1em}{}
\titleformat{\subsubsection}[runin]
  {\normalfont\small}{\thesubsubsection}{1em}{}
\titleformat*{\paragraph}{\large\bfseries}
\titleformat*{\subparagraph}{\large\bfseries}
\newtheorem*{theorem-non}{Theorem}
\title{\normalsize \textbf{NON-ISOMETRIC PAIRS OF RIEMANNIAN MANIFOLDS WITH THE SAME GUILLEMIN-RUELLE ZETA FUNCTION}}
\author{Hy P.G. Lam \footnote{Research partially supported by NSF RTG grant DMS-1502632.}}
\date{}
\begin{document}

\maketitle

\begin{abstract}
    {\sc Abstract. } In \cite{Su85}, T. Sunada constructed a vast collection of non-isometric Laplace-isospectral pairs $(M_1,g_1)$, resp. $(M_2,g_2)$ of Riemannian manifolds. He further proves that the Ruelle zeta functions $Z_g(s):= \prod_{\gamma}(1 - e^{-sL(\gamma)})^{-1}$ of $(M_1,g_1)$, resp. $(M_2,g_2)$ coincide, where $\{\gamma\}$ runs over the primitive closed geodesics of $(M,g)$ and $L(\gamma)$ is the length of $\gamma$. In this article, we use the method of intertwining operators on the unit cosphere bundle to prove that the same Sunada pairs have identical Guillemin-Ruelle dynamical L-functions 
    $\textup{L}_G(s) = \sum_{\gamma\in \mathscr{G}}\frac{L_\gamma^\# e^{-sL_\gamma}}{|\textup{det}(I -\mathbf{P}_\gamma)|}$, where the sum runs over all closed geodesics. 
\end{abstract}

\section{\centering \sc{Introduction}} 
~~~The purpose of this article is to use a modified Sunada construction to produce non-isometric pairs with the same Guillemin-Ruelle dynamical zeta function. To state the result, we need to introduce some background and notation. Assume that a connected compact manifold $M_0$ fits as the base of a diagram of finite normal covers,
\begin{equation}\label{maindiag}
  \begin{tikzcd}
& (M,\pi^*g_0) \arrow[dl,swap,"\pi_1"] \arrow[dd,"\pi"] \arrow[dr,"\pi_2"]\\
(M_1:= H_1\backslash M,p_1^*g_0)  \arrow[dr,swap,"p_1"] &&
  (M_2: = H_2\backslash M,p_2^*g_0) \arrow[dl,"p_2"] \\
& (M_0 = G\backslash M,g_0)
\end{tikzcd}  
\end{equation}
where the triplet $(G, H_1 ,H_2) $ satisfies $L^2 (G/H_1) \simeq L^2(G/H_2)$ as unitarily equivalent $G$-modules. Suppose that $G$ acts freely on $M$. Sunada then proves that for any metric $g_0$ on $M_0$, the induced metrics $g_i := p_i^*g_0 ~(i= 1,2)$ are Laplace-isospectral \cite{Su85}. In addition, the pairs have the same Ruelle length L-function $\textup{L}_g(s) = \sum_{\gamma\in\mathscr{G}} e^{-sL_\gamma}$, where the sum runs over all closed geodesics (see \cite{Su94} for more geometric descriptions of $L$-functions).

\medskip

Diagram \eqref{maindiag} induces a diagram of finite covers of cosphere bundles,
\begin{equation}\label{2diag}
  \begin{tikzcd}
& S^*M \arrow[dl,swap,"\widetilde{\pi}_1"] \arrow[dd,"\widetilde{\pi}"] \arrow[dr,"\widetilde{\pi}_2"]\\
S^*M_1 \arrow[dr,swap,"\widetilde{p}_1"] &&
  S^*M_2. \arrow[dl,"\widetilde{p}_2"] \\
& S^*M_0 
\end{tikzcd} 
\end{equation}

For each cosphere bundle of \eqref{2diag}, we denote the contact Hamiltonian flow by $G^t_{g}$ (\S \ref{Hamilflow}), and the set of its periodic orbits $\gamma = \gamma(t)$ by $\mathscr{G}$. We denote $L_\gamma$ and  $ L_\gamma^\#$ respectively the length and prime period of $\gamma$. Let $\mathbf{P}_{\gamma}$ be the associated linearized Poincar\'{e} map (\S \ref{Poincaremap}).  $G^t_g$ is said to be \textit{Lefschetz} if all of its periodic orbits are non-degenerate, that is, the maps $I-\mathbf{P}_{\gamma}$ are invertible. Under this hypothesis, we define the Guillemin-Ruelle zeta function with respect to a given Riemannian metric $g$ by 

\begin{equation}\label{Lfunction}
    \textup{L}_G(s) =  \sum_{\gamma\in\mathscr{G}} \frac{L^\#_\gamma e^{-sL_\gamma}}{|\det(I
    - \mathbf{P}_{\gamma})|}
\end{equation}

More generally, a geodesic flow is said to be \textit{clean} if its fixed point sets are Bott-Morse non-degenerate (\S \ref{cleanfix}). For a fixed $t>0$, we let $\text{Fix}(G^t_g) := \{\zeta \in S^*M: G_g^t \zeta = \zeta\} $ be the fixed points set of $G^t_g$. Under the clean fixed point condition, $\textup{Fix}(G^t_g) = Z_1 \cup \ldots \cup Z_d$, each $Z_j$ is a submanifold of $S_g^*M$ and $T_\zeta Z_j = \textup{Fix}((dG_g^t)_\zeta)$. The Poincar\'{e} map is replaced by the \textit{normal} Poincar\'{e} map $ P_{\gamma}^{\#}$ 
in (\ref{Lfunction}) (see \S \ref{canvolZj}), which is simply the bijective mapping induced by $dG^t_g$
\begin{equation}\label{normalPoincare}
(I - P_{Z_j}^{\#}): = (I - dG^t_g)^\#: T S^*M  / T Z_j \rightarrow T S^*M / T Z_j.
\end{equation}

Furthermore, on each $Z_j$, there is a Gelfand-Leray form
\[
\int_{Z_j}d\nu_{Z_j}(\zeta) = \int_{Z_j}\frac{d\mu_L}{|\det(I - \mathbf{P}_{Z_j}^\#)| \hat{\beta}}
\]
where $\mu_L$ is the normalized Liouville measure on $S_g^*M$ and $\hat{\beta}$ is the volume density on the normal space $NZ_j$ determined by the defining equation $dG^t_g \zeta = \zeta $ (\S \ref{deltadensity}, see also \S \ref{stationaryphase}).


Let $\mathscr{F}$ be the union of all component of all fixed point sets. Let $L_Z$ be the common length of all closed geodesics in $Z$ so that $G^{L_Z}_g$ fixes $Z$ and denote $d\mu_Z := \hat{\beta} \lrcorner d\mu_L$.

\begin{equation}\label{Lfunction2}
    \textup{L}_G(s) =  \sum_{Z\in\mathscr{F}} e^{-sL_Z}\int_Z |\det(I
    - \mathbf{P}_Z^\#)|^{-1}d\mu_Z
\end{equation} 

\begin{theorem} \label{Maintheorem} Let $\pi: (M,\pi^*g_0) \rightarrow (M_0,g_0)$ be as in the Sunada diagram \eqref{maindiag}. Let $G^t_{g_i}$ be (contact) Hamiltonian flows on each cosphere bundle $(S^*M_i,g_i)$, $i=1,2$. Let $L_{G_i}$ be the Guillemin-Ruelle  L-function associated to $g_i$, $i=1,2$. If the flows $G^t_{g_i}$ are clean and the triplet $(G, H_1 ,H_2) $ satisfies $L^2 (G/H_1) \simeq L^2(G/H_2)$ as unitarily equivalent $G$-modules $(\dagger)$, then $\textup{L}_{G_1} = \textup{L}_{G_2}$.
\end{theorem}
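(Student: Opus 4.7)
The plan is to lift each L-function from the quotient $S^*M_i$ to the common cover $S^*M$, re-express the downstairs fixed-point integrals as weighted sums of ``twisted'' fixed-point integrals indexed by elements of $H_i$, and then invoke the classical Sunada almost-conjugacy condition (equivalent to $(\dagger)$) to match the $H_1$-sum with the $H_2$-sum term-by-term. Concretely, since $H_i$ acts freely on $M$ by isometries commuting with the Hamiltonian flow, for each $T>0$ a point $\bar\zeta\in S^*M_i$ is $G^T_{g_i}$-fixed iff any lift $\tilde\zeta\in S^*M$ satisfies $G^T\tilde\zeta = h\tilde\zeta$ for some (unique) $h\in H_i$, and $h$ depends $H_i$-equivariantly on the lift via conjugation. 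This yields the decomposition
\[
\textup{Fix}(G^T_{g_i},\,S^*M_i) \;=\; \bigsqcup_{[h]_{H_i}} \textup{Fix}(h^{-1}G^T,\,S^*M)\,/\,Z_{H_i}(h),
\]
and because $\widetilde{p}_i\circ(h^{-1}G^T) = G^T_{g_i}\circ \widetilde{p}_i$, the differential $d\widetilde{p}_i$ intertwines $d(h^{-1}G^T)_{\tilde\zeta}$ with $(dG^T_{g_i})_{\bar\zeta}$, so the fixed subspaces correspond, the normal Poincar\'e maps match, and the Gelfand-Leray densities transfer without extra Jacobian factors.

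Pushing each component integral in \eqref{Lfunction2} up to $S^*M$ via the $|H_i|$-fold cover $\widetilde{p}_i$ then gives
\[
\Phi_i(T) \;:=\; \int_{\textup{Fix}(G^T_{g_i})}\!\!\frac{d\mu}{|\det(I-\mathbf P^\#)|} \;=\; \frac{1}{|H_i|}\sum_{h\in H_i}\int_{\textup{Fix}(h^{-1}G^T,\,S^*M)}\!\!\frac{d\tilde\mu}{|\det(I-(d(h^{-1}G^T))^\#)|}.
\]
The full group $G$ acts on $S^*M$ by flow-commuting isometries, so conjugation by $g\in G$ sends $\textup{Fix}(h^{-1}G^T)$ isometrically onto $\textup{Fix}((ghg^{-1})^{-1}G^T)$ preserving the integrand; hence the inner integral depends only on the $G$-conjugacy class $[h]_G$. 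Denoting this common value by $\Psi([h]_G,T)$,
\[
\Phi_i(T) \;=\; \frac{1}{|H_i|}\sum_{[h]_G}\bigl|H_i\cap[h]_G\bigr|\cdot \Psi([h]_G,T).
\]
Hypothesis $(\dagger)$ is equivalent to $|H_1|=|H_2|$ together with $|H_1\cap[h]_G| = |H_2\cap[h]_G|$ for every $G$-conjugacy class $[h]_G$, and these identities force $\Phi_1(T) = \Phi_2(T)$ for every $T$ in the period spectrum. Summing against $e^{-sT}$ concludes $\textup{L}_{G_1}(s) = \textup{L}_{G_2}(s)$.

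The main obstacle I expect is the rigorous descent of the Gelfand-Leray form in the Bott-Morse clean setting. One must verify that cleanness of $G^t_{g_i}$ downstairs forces simultaneous cleanness of each twisted flow $h^{-1}G^t$ on $S^*M$ (this uses $G$-equivariance together with the almost-conjugacy hypothesis to transfer cleanness between $h\in H_1$ and $h\in H_2$), and that the defining equation $G^T\tilde\zeta = h\tilde\zeta$ upstairs produces the same normal-bundle volume density $\hat\beta$ as the equation $G^T_{g_i}\bar\zeta = \bar\zeta$ downstairs under the local isometry $d\widetilde{p}_i$. Once these compatibility checks are in place, the argument reduces to the purely combinatorial identity supplied by almost-conjugacy, and the isolated non-degenerate case is a special instance of the same formula with zero-dimensional components $Z$.
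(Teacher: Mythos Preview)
Your argument is correct and essentially complete: the lift of $\textup{Fix}(G^T_{g_i})$ to $\bigsqcup_{h\in H_i}\textup{Fix}(h^{-1}G^T)$, the local-isometry transfer of the Gelfand--Leray densities, and the reduction to the almost-conjugacy count $|H_1\cap[h]_G|=|H_2\cap[h]_G|$ all go through as you describe. However, this is \emph{not} the route the paper takes. The paper first proves the stronger Theorem~\ref{Vtheorem}, constructing an explicit unitary Fourier integral operator $\widetilde{U}_A$ (a finite Radon transform built from the intertwining kernel $A\in\mathbb{C}[H_2\backslash G/H_1]$) that conjugates the Koopman operators $V^t_{g_1}$ and $V^t_{g_2}$; it then shows in Theorem~\ref{Traceprop}, via a Schwartz-kernel computation with $\widetilde{U}_A$ applied to characteristic functions of the diagonal, that the Guillemin flat traces agree, and finally invokes Proposition~\ref{cleanprop} to translate flat-trace equality into $\textup{L}_{G_1}=\textup{L}_{G_2}$. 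Your approach is more elementary and more geometric---it is really Sunada's original length-spectrum argument upgraded to carry the Poincar\'e-map weights---and it reaches the L-function identity directly without any operator-theoretic machinery. The paper's approach costs more setup but delivers an independent result of interest (unitary equivalence of the Koopman dynamics, not merely equality of a single trace invariant), and its flat-trace formalism would generalize more readily to other operator traces. Your method, by contrast, makes the dependence on the almost-conjugacy hypothesis completely transparent and would adapt immediately to any dynamical L-function whose local contributions are isometry-invariant.
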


To prove Theorem \ref{Maintheorem}, we first show that the Koopman operators associated to the (co)-geodesic flows $G^t_{g_i}$ on the pair $S^*M_i$, $i=1,2$ from \eqref{maindiag} are unitarily equivalent by a unitary Fourier integral operator. The Koopman operators on $L^2(S^*M_i, d\mu_{L_i})$ are defined by $V_{g_i}^t f = f \circ G^t_{g_i}$ (\S \ref{KoopmanOp}). The unitary Fourier integral operators are constructed via a generalization of the technique developed by S. Zelditch in \cite{Z92}. In the article, the author provides an explicit intertwining operator defined by a finite Radon transform associated to the diagram of covers in \eqref{maindiag}. By definition, a finite Radon transform is a Fourier integral operator associated to a finitely multi-valued symplectic correspondence. In our case, we define a finite Radon transform on the unit cosphere bundle $\widetilde{U}_A : L^2(S^*M_1,dx_1)\rightarrow L^2(S^*M_2,dx_2)$ associated to the induced covering diagram \eqref{2diag}.

Here, we identify the sphere bundle to the copshere bundle via the metric. $\widetilde{U}_A$ has the form
\begin{equation}\label{liftKoop}
\widetilde{U}_A= \frac{1}{\# H_1}\sum_{a\in G} A(a)\widetilde{\pi}_{2*}\widetilde{T}_{a}\widetilde{\pi}_{1}^*: L^2(S^*M_1, d\mu_{L_1}) \rightarrow L^2(S^*M_2,d\mu_{L_2})
\end{equation}
where $A$ is the function on $H_2\backslash G/H_1$ inducing the intertwining operator $\mathcal{A}:  L^2 (G/H_1) \rightarrow L^2(G/H_2)$ (\S\ref{UIO}). The translations $\{T_a\}_{a\in G}$ are local isometries that lift to a map $\widetilde{T}_a: S^*M \rightarrow S^*M$ where $\widetilde{T}_a(x,\xi) = \left(a\cdot x, (a^{-1})^* \cdot \xi\right) $.

\begin{theorem}\label{Vtheorem}
Let $\pi: (M,\pi^*g_0) \rightarrow (M_0,g_0)$ be a normal finite Riemannian covering with covering transformation group G as in \eqref{maindiag}.  If the triplet $(G, H_1, H_2)$ satisfies $(\dagger)$, then the Koopman operators $V^t_{g_i} : C^\infty_0 S^*M_i \rightarrow C^\infty_0 S^*M_i$ associated with the subcovers $(M_i, g_i)$ for each $i=1,2$, are unitarily equivalent via $\widetilde{U}_A$. Namely,  \[\widetilde{U}_AV^t_{g_1}\widetilde{U}^*_A = V^t_{g_2}\]
\end{theorem}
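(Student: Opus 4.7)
The plan is to prove the identity $\widetilde{U}_A V^t_{g_1}\widetilde{U}_A^* = V^t_{g_2}$ by first establishing the intertwining relation $\widetilde{U}_A V^t_{g_1} = V^t_{g_2}\widetilde{U}_A$ and then invoking the unitarity of $\widetilde{U}_A$ (guaranteed by the Sunada hypothesis $(\dagger)$ via the construction in \S\ref{UIO}). The intertwining factors cleanly through the three elementary building blocks of $\widetilde{U}_A$ in \eqref{liftKoop}, so the strategy reduces to verifying that each of $\widetilde{\pi}_1^*$, $\widetilde{T}_a$, and $\widetilde{\pi}_{2*}$ commutes with the appropriate Koopman operator.

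First I would record that, since each $\pi_i: M \to M_i$ is a Riemannian local isometry, the Hamiltonian vector field of the metric symbol lifts and descends; consequently $d\widetilde{\pi}_i\circ G^t_g = G^t_{g_i}\circ\widetilde{\pi}_i$ on the cosphere bundles of \eqref{2diag}. Taking pull-back and push-forward of functions yields
\[
\widetilde{\pi}_1^*\, V^t_{g_1} = V^t_g\, \widetilde{\pi}_1^*, \qquad V^t_{g_2}\, \widetilde{\pi}_{2*} = \widetilde{\pi}_{2*}\, V^t_g,
\]
where $V^t_g$ denotes the Koopman operator of $G^t_g$ on $L^2(S^*M,d\mu_L)$. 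Next, because each $a\in G$ acts on $(M,\pi^*g_0)$ by isometry, its symplectic lift $\widetilde{T}_a$ preserves the metric symbol $|\xi|_g$ and thus commutes with the Hamiltonian flow, giving $\widetilde{T}_a V^t_g = V^t_g \widetilde{T}_a$. Inserting these three commutations termwise into \eqref{liftKoop} gives
\[
\widetilde{U}_A V^t_{g_1} = \frac{1}{\#H_1}\sum_{a\in G} A(a)\,\widetilde{\pi}_{2*}\widetilde{T}_a\widetilde{\pi}_1^*\, V^t_{g_1} = \frac{1}{\#H_1}\sum_{a\in G} A(a)\,\widetilde{\pi}_{2*}\, V^t_g\,\widetilde{T}_a\widetilde{\pi}_1^* = V^t_{g_2}\,\widetilde{U}_A,
\]
where $V^t_g$ is first produced by commuting through $\widetilde{\pi}_1^*$, then passed through $\widetilde{T}_a$ and finally through $\widetilde{\pi}_{2*}$ to become $V^t_{g_2}$. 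Composing on the right with $\widetilde{U}_A^*$ yields the claimed identity.

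The algebraic intertwining outlined above is essentially formal once one grants the three commutation relations, so the main obstacle I anticipate is not in the manipulation but in ensuring that the pull-back and push-forward are normalized carefully enough that the adjoint relation $(\widetilde{\pi}_i^*)^* = \widetilde{\pi}_{i*}$ holds exactly on $L^2$ for the chosen Liouville measures $d\mu_{L_i}$, and that the combinatorial factor $1/\#H_1$ in \eqref{liftKoop} matches the fibre cardinalities of $\widetilde{\pi}_1,\widetilde{\pi}_2$ so that $\widetilde{U}_A$ is genuinely unitary. This bookkeeping is exactly where the Sunada hypothesis $(\dagger)$ enters, through the unitary $G$-module isomorphism $\mathcal{A}: L^2(G/H_1)\to L^2(G/H_2)$ whose kernel $A$ on $H_2\backslash G/H_1$ supplies the coefficients in \eqref{liftKoop}. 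Once that unitarity is in place, the Koopman intertwining follows directly from the geometric commutation of the metric-symbol flow with covering maps and deck transformations, and the theorem is proved.
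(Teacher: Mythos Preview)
Your proposal is correct and follows essentially the same route as the paper: establish the three commutation relations $\widetilde{\pi}_1^* V^t_{g_1} = V^t_g\,\widetilde{\pi}_1^*$, $\widetilde{T}_a V^t_g = V^t_g\,\widetilde{T}_a$, and $\widetilde{\pi}_{2*} V^t_g = V^t_{g_2}\,\widetilde{\pi}_{2*}$, pass $V^t_{g_1}$ through the sum defining $\widetilde{U}_A$ term by term, and then use the separately proved unitarity of $\widetilde{U}_A$ (Proposition~\ref{Radon}) to convert the intertwining into the conjugation form. Your caveat about the normalization and the $1/\#H_1$ factor is exactly what is handled in Proposition~\ref{Radon}, not in the proof of Theorem~\ref{Vtheorem} itself, so your separation of concerns matches the paper's.
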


The Schwartz kernel $k^t(\xi,\eta)$ of the Koopman operator $V^t_g$ is a $\delta$-function $\delta(\xi - G^t_g \eta) $. We refer to \cite{G77} for the use of the Schwartz kernel theorem in this statement.  Guillemin further defined the flat trace of $V^t_g$ formally by 
\begin{equation}\label{flatdef}
    \textup{Tr}^\flat V^t_g = \int_{S^*M}\delta(\xi - G^t_g \xi) d\mu_L(\xi) 
\end{equation}
More precisely, the trace is defined as a pushforward-pullback operation on distributions by  $\Pi_*\Delta^*\delta(\xi - G^t_g\xi)$ where $\Delta: S^*M\rightarrow S^*M\times S^*M$ is the diagonal embedding, and $\Pi: S^*M \times \R^+ \rightarrow S^*M$ is the natural projection. 

\medskip 

If all periodic trajectories of the geodesics flows $G^t_{g_i}$ are non-degenerate, then the righthand side of \eqref{flatdef} becomes  \cite{G77}
\begin{equation}\label{Guilleminflat}
\textup{Tr}^\flat V^t_g = \sum_{\gamma \in \mathscr{G}}\frac{L_{\gamma}^\#\delta(t-L_\gamma)}{|\textup{det}(I - \textbf{P}_\gamma)|}, ~ t > 0 
\end{equation}
In \S\ref{proofcleanprop}, we show that \eqref{Guilleminflat} has a natural extension to geodesic flows with clean fixed point sets.

\begin{proposition}\label{cleanprop}Assume all of the fixed points of $G^t_g$ are clean, then
\begin{equation}\label{sumflattrace}
\textup{Tr}^\flat V^t_g = \sum_{Z\in\mathscr{F}}\delta(t - L_Z)\int_Z \frac{d\mu_Z}{|\det(I- \mathbf{P}_Z^\#)|}
\end{equation}
\end{proposition}

\medskip

We show in \S \ref{proofofsameflattrace} that the Riemannian pairs resulting from Sunada's construction have the same Guillemin flat trace, namely, $\textup{Tr}^\flat V^t_{g_1} = \textup{Tr}^\flat V^t_{g_2}$. Theorem \eqref{Maintheorem} follows from this result as a direct corollary. Here, it is important to note that the flat trace is not an $L^2$-trace and therefore it does not follow from Theorem \ref{Vtheorem} that $V^t_{g_1}, V^t_{g_2}$ have the same flat trace.


\textbf{Remark.}  Proposition \ref{cleanprop} is the analogue for the Koopman operator of the Duistermaat-Guillemin wave trace formula. In the case of Lefschetz flows, the formula states \cite{GuDu}
\begin{equation}\label{wavetrace}
    \textup{Tr} \left( \exp\left(it\sqrt{\Delta_g}\right)\right) = \sum_{j=1}^\infty e^{i\lambda_jt} = \sum_{\gamma \in \mathscr{G}}\frac{i^{\sigma_\gamma}L_\gamma^\#\delta(t- L_\gamma)}{|\textup{det}(I - \textbf{P}_{\gamma})|^{\frac{1}{2}}} + L^1_{loc}, ~~ t \in \R
\end{equation}
with respect to the Laplace-Beltrami operator $\Delta_g: C^\infty M \rightarrow C^\infty M$, where $\lambda_j$ are the eigenvalues of $\sqrt{\Delta_g}$, and $\sigma_{\gamma}$ is the Maslov index of $\gamma$. Laplace isospectrality is equivalent to
\begin{equation}\label{wavetraceiso}
    \textup{Tr} \left( \exp\left(it\sqrt{\Delta_{g_1}}\right)\right) = \textup{Tr} \left( \exp\left(it\sqrt{\Delta_{g_2}}\right)\right),
\end{equation}
and by \eqref{wavetrace}, 
\begin{equation}\label{wavetraceiso2}
    \sum_{\gamma \in \mathscr{G}_ {g_1}}\frac{i^{\sigma_\gamma}L_\gamma^\#\delta(t- L_\gamma)}{|\textup{det}(I - \textbf{P}_{\gamma})|^{\frac{1}{2}}} = \sum_{\gamma \in \mathscr{G}_{g_2}}\frac{i^{\sigma_\gamma}L_\gamma^\#\delta(t- L_\gamma)}{|\textup{det}(I - \textbf{P}_{\gamma})|^{\frac{1}{2}}}
\end{equation}

S.Chen and A.Manning \cite{ChMa} studied the associated L-function
\begin{equation*}
    \log Z_g(s) = \sum_{\gamma \in \mathscr{G}}\sum_{k=1}^\infty \frac{e^{-skL^\#_\gamma}}{k|\textup{det}(I - \textbf{P}^k_\gamma)|^{\frac{1}{2}}} 
\end{equation*}
where $\textbf{P}_{\gamma}^k $ being the linearized Poincar\'{e} map corresponding to the $k$-fold iterate of $\gamma$. It follows from \eqref{wavetraceiso} and \eqref{wavetraceiso2} that isospectrality implies 
$\log Z_{g_1} = \log Z_{g_2}$. However, if there are multiplicities in the length spectrum of $S^*M$, one cannot determine the individual term of each length from the sum, and therefore one cannot determine $\textup{L}_G$ from \eqref{wavetraceiso}, \eqref{wavetraceiso2}.

\subsection{\normalsize \textbf{Convergence of the L-function. }} For compact negatively curved manifolds, Chen-Manning  \cite{ChMa} proved that the zeta function $Z_g(s)$ converges absolutely for $\textup{Re }s > P(\frac{1}{2}\alpha)$, where $\alpha$ is the volume growth rate in the orientable unstable foliation on the cotangent bundle of $S^*M$, and $P$ is the pressure of Sinai's function on $S^*M$. It follows from their theorem that $\textup{L}_G(s)$ converges absolutely on the right half-plane $\{\textup{Re }s > P(\alpha)\}$ under the same assumption. It is proven by Dyatlov-Zworski that $Z_g(s)$ has a meromorphic continuation to the whole complex plane \cite{DyZw} (see also \cite{GLP}). There are many articles concerned with the meromorphic continuation of the Ruelle zeta function; we refer to \cite{DyZw} for references and background.

\medskip

A natural problem is to show that there is a half-plane of convergence for $\textup{L}_G(s)$ for any compact Riemannian manifold and to study its memormorphic continuation. In the forthcoming article \cite{H22}, we will use a heat kernel approach to flat traces to prove that $\textup{L}_G(s)$ converges in $\{Re(s) > 0\}$ at least for the cases of flat tori, spheres, compact hyperbolic manifolds and for Heisenberg nilmanifolds. In work in progress, we extend the result to general non-positively curved manifolds.

\subsection{\normalsize \textbf{Acknowledgment. }}
This article is part of the author's PhD thesis at Northwestern University under the guidance of Steve Zelditch.

\section{\centering \sc{Preliminaries}}

\subsection{\normalsize  \textbf{Contact Hamiltonian flow}.}\label{Hamilflow} Consider $G^t_g: (S_g^*M,\mu_L) \rightarrow (S_g^*M, \mu_L)$ to be the geodesic flow on the unit cosphere bundle generated by the quadratic form 
\[
H(x,\xi) = \frac{1}{2}|\xi_x|^2_{g}=\frac{1}{2}g^{ab}(x)\xi_a\xi_b
\]
with $g^{ab}(x)$ is the inverse metric tensor via $g^{ab}(x)g_{bc}(x) = \delta^a_c$, and $\xi_x$ is a covector in $S^*_xM$ realized under the local trivialization of $S^*M$ restricted to a coordinate neighborhood $U$, in which a 1-form $\zeta = \xi_adx^a$ in  $S_x^*M|_U$ is identified with the point $(x,\xi_a) \in U \times \mathbb{S}^{n-1}$.

In local (Darboux) coodinates, $G^t_g$ satisfies the geodesic equations
\[
\dot{x}^a = \frac{\partial H}{\partial \xi_a} = g^{ab}(x)\xi_b~; ~~
\dot{\xi}_a = -\frac{\partial H}{\partial x^a} = -\frac{1}{2}\frac{\partial g^{bc}(x)}{\partial x^a}\xi_a\xi_b.
\]
and is a contact Hamiltonian flow on $S^*_gM$, which projects to a geodesic line on $M$. 

The Liouville measure $\mu_L$ on $S^*M$ is the Leray measure $\frac{(dx \wedge d\xi)^n}{dH}$ where $(dx\wedge d\xi)^n$ is the symplectic volume measure. The canonical 1-form $\alpha$ on $T^*M$ is given by $\alpha_{(x,\xi)}(v) = \xi_x(d\pi(v))$ for $\pi: T^*M \rightarrow M$.

Let $\eta = \alpha\big|_{S^*M}$ be the contact form obtained by restricting $\alpha$ to the cosphere bundle. The Reeb vector field $R_\eta$ is uniquely determined by 
\[
\eta(R_\eta) = 1 ~~\textup{and}~~ R_\eta \lrcorner d\eta = 0.
\]
The contact Hamiltonian vector field $X_H$ is then defined by 
\[
\eta(X_H) = - H ~~\textup{and}~~ X_H \lrcorner d\eta = dH - \mathcal{L}_{X_H}\eta.
\] 
where $\mathcal{L}_{X_H}\eta$ is the Lie derivative of $\eta$ along the vector field $X_H$.

\subsection{\normalsize  \textbf{Poincar\'{e} map}.}\label{Poincaremap}
 Let $\gamma = \gamma(t)$ be a periodic trajectory of $G^t_g$ with period $\tau$ on $S^*M$
starting at $\zeta$. Let $\Sigma $ be a transversal local hypersurface to the flow at $\zeta$. Let $T$ be the first return time of an orbit starting from $\Sigma$ to $\Sigma$. The first-return map is the following 
\[\Phi_{\Sigma}: \Sigma \rightarrow \Sigma; ~\eta \mapsto  G_g^{T(\eta)} \eta.\]

At the periodic point $\zeta$, the linear Poincare map is defined  by
\begin{equation}\label{Poincare}
    P_{\gamma} = (d\Phi_\Sigma)_\zeta: T_\zeta \Sigma \rightarrow T_\zeta \Sigma
\end{equation}
It is useful to give a description of the Poincare map in terms of Jacobi fields (cf. \cite{Su94}),

\begin{equation*}
\mathbf{P}_{\gamma} = \begin{pmatrix}
\textbf{A}& \textbf{B}\\
&\\
\textbf{C} & \textbf{D}
\end{pmatrix} = \begin{pmatrix}
(\langle v_i ,U_j(\tau) \rangle)_{i,j} & (\langle v_i, V_j(\tau)\rangle)_{i,j} \\
&\\
(\langle v_i, \nabla_{\dot{\gamma}} U_j(\tau) \rangle)_{i,j} & (\langle v_i, \nabla_{\dot{\gamma}} V_j(\tau) \rangle )_{i,j}
\end{pmatrix} 
\end{equation*}
where $U_i(t), V_i(t)$ are the basis of normal Jacobi fields along $\gamma$ with $U_i(0) = v_i$ , $\nabla U_i(0) = 0$, $V_i(0) = 0 $, $\nabla V_i(0) = v_i $ where  $\{v_i\}$ is an orthonormal basis of $T_\zeta \Sigma$ (see \S 3.4). By Schur's formula, 
\begin{equation*}
    \det (I - \mathbf{P}_{\gamma})= \det( I-\textbf{D} )  \det ((I -\textbf{A})-  \textbf{B}(I-\textbf{D})^{-1}\textbf{C}) 
\end{equation*}

A periodic geodesic orbit $\gamma$ is \textit{non-degenerate} if $\det (I -\mathbf{P}_{\gamma})\neq 0$. This means that the map $I -\mathbf{P}_\gamma$ does not have 1 as its eigenvalue. Examples of non-degeneracy are when the eigenvalues are of the following three types: (i) $e^{\pm i \alpha_j}$ where the $\alpha_j$'s and $\pi$ are independent over $\mathbb{Q}$ (elliptic); (ii) $e^{\pm \lambda_j}$ where $\lambda_j \in \R\backslash \{0\}$ (hyperbolic); (iii) $e^{\pm\mu_j\pm i \nu_j}$ with $\mu_j,\nu_j\in \R$ not all zero (loxodromic). More generally, we are interested in clean fixed point sets.



\subsection{\normalsize  \textbf{ Clean fixed point set of $G^t_g$.}}\label{cleanfix} We denote the fixed point set of $dG_g^t : TS_g^*M \rightarrow TS_g^*M$ by $\textup{Fix}(dG_g^t)$ note that $T\text{Fix}(G^t_g) \subset\textup{Fix}(dG_g^t)$ as $G_g^t$ is the identity map on $\text{Fix}(G^t_g)$. In general, the fixed point set of $dG^t_g$ can be larger than $T\textup{Fix}(G^t_g)$, or equivalently, there can exist normal Jacobi fields that do not come from varying the periodic geodesics in the fixed point set.   

As defined above, $G_g^t$ is said to have a \textit{clean} fixed point set for period $\tau$ if 
 $T  \text{Fix}(G^\tau_g) = \textup{Fix}(dG^\tau_g)$.

Assume $Z$ is a clean component of period $\tau$ where  all points in $Z$ share the same periodic trajectory, then a local transversal region to the geodesic flow restricted to a neighborhood about a point $\zeta \in Z$ is a hyperspace $\mathbf{\Sigma}$ with codimension strictly greater than 1.

The manifold $M$ is said to satisfy the \textit{Clean Intersection Hypothesis} if $G^t_g$ has a clean fixed point set for all period $\tau \in
\textup{Lsp}(S_g^*M)$.



\subsection{\normalsize \textbf{Canonical volume density on a clean submanifold and the normal \linebreak Poincar\'{e} map. }}\label{canvolZj}Suppose $Z$ is a clean component of $\textup{Fix}(G^\tau_g)$ consisting of periodic orbits of period $\tau \in \textup{Lsp}(S_g^*M)$.
A periodic orbit in $Z$  projects under $\pi$ to a closed geodesic $\bar{\gamma}(t)$ on $M$.

The geodesic variations coming from the family of closed geodesics on $Z$ form the total space of Jacobi fields on $\pi(Z)$. Given a Jacobi field $\bar{J}$ in this space, $\bar{J}$ lifts to a flow-invariant field on $S^*M$ along $\gamma$ via the correspondence
\[
\bar{J}\rightarrow (\bar{J}^h , \nabla_{\dot{\gamma}}\bar{J}^v ) 
\]
where $\bar{J}^h$, $\nabla_{\dot{\gamma}}\bar{J}^v$ are respectively the horizontal and vertical lifts of $\bar{J}$ and $\nabla_{\dot{\gamma}}\bar{J}$ [Lemma 3.1.6, \cite{Kl}]. 

\smallskip

Denote the total space of periodic Jacobi fields on $Z$ along $\gamma$ by $\mathcal{J}_Z(\gamma)$.  Given $X, Y \in \mathcal{J}_Z(\gamma)$, 
the Riemannian inner product is 
\begin{equation}\label{metricJfields}
    \langle X , Y \rangle = \int_{\bar{\gamma}} g\left(d\pi (X_h(s)), d\pi (Y_h (s))\right)ds 
\end{equation}
where $X_h$, $Y_h$ are the horizontal components of $X$ and $Y$ via the splitting induced from the Riemannian connection on $S_g^*M$. 

Denote further $\mathcal{J}^\parallel_Z(\gamma)$ to be the subspace of periodic J-fields that are $Z$-tangential along $\gamma$. This subspace spans $T\big|_{\gamma} Z$. Also, we have an orthonormal basis of $\mathcal{J}^\parallel_Z(\gamma)$ including $\dot{\gamma}$, and hence obtain a collection of 1-forms $\beta^1, \ldots, \beta^k$ on $Z$ corresponding to the dual basis, which yields
\begin{equation}\label{volcanmeasure}
 |d\textup{Vol}_{can}(Z)|(\zeta) = \left|\beta^1_\zeta \wedge \ldots \wedge \beta^k_\zeta \right|
\end{equation}
as a natural volume element on $Z$.
We should like to point out that there is a symplectic structure on the space of all Jacobi fields along $\gamma$ given by the Wronskian 
\begin{equation*}
    \omega(J, J') = g(J, \nabla_{\dot{\gamma}} J') - g( \nabla_{\dot{\gamma}} J, J').
\end{equation*}

Now denote $\mathcal{J}_Z^\perp (\gamma) := \mathcal{J}_Z(\gamma)/\mathcal{J}_Z^\parallel(\gamma)$ to be the space of periodic Jacobi fields along $\gamma$ that are transversal to $Z$. This space spans $T\big|_{\gamma}S^*_gM/ T\big|_{\gamma}Z$. The normal Poincare map is
 realized as the linear symplectic transformation on $\mathcal{J}_Z^\perp (\gamma)$, which is given by 
\[
(\textbf{P}^\#_Z)_\zeta: T_\zeta S^*_gM/ T_\zeta Z \oplus T_\zeta S^*_gM/ T_\zeta Z \circlearrowleft_{\textup{translate by~} t =\tau}\]
\[
(J(0) , \nabla_{\dot\gamma}J(0)) \mapsto (J(\tau), \nabla_{\dot\gamma} J(\tau))\]
where $J(t) \in \J_Z^\perp (\gamma)$.  Let $\nu_1, \ldots, \nu_{2n-1 - k}$ $(k = k(j,\tau) = \dim Z )$ be an orthonormal basis of $T_{\zeta}S^*_gM/ T_{\zeta}Z$ along $\gamma$ with respect to the metric \eqref{metricJfields}. Let $U_i(t), V_i(t)$ be an orthonormal basis of $\J^\perp_Z(\gamma)$ with the initial conditions 
\begin{equation}\label{Jbasis}
     \begin{cases}
          U_i(0) = \nu_i ~, ~ V_i(0) = 0\\
          \\
           \nabla_{\dot{\gamma}}U_i(0) = 0 ~,~ \nabla_{\dot{\gamma}} V_i(0) = \nu_i. \\
    \end{cases}       
\end{equation}
In block-matrix form, the transformation of $U_i, V_i$ by $(P^\#_Z)_\zeta$ is 
\begin{equation}\label{normalPoincareinJfields}
 (\textbf{P}^\#_Z)_\zeta=  \begin{pmatrix}
    (\langle \nu_i, U_l(\tau)\rangle)_{i,l} & (\langle \nu_i, V_l(\tau) \rangle)_{i,l}\\
    & \\
    (\langle \nu_i, \nabla_{\dot\gamma}U_l(\tau)\rangle)_{i,l} & (\langle \nu_i, \nabla_{\dot\gamma}V_l(\tau) \rangle)_{i,l}
    \end{pmatrix}.
\end{equation}


\subsection{\normalsize \textbf{Stationary phase with Bott-Morse nondegenerate critical submanifold}.}\label{stationaryphase} We will need the clean version of the stationary phase formula and recall its statement in this section. For further background and proofs of the formula, we refer to \cite{GuSc13}, \cite{GuSc90}, \cite{Hor}. 

Let $(\mathcal{X},\mu)$ be an $N$-dimensional manifold equipped with a positive density measure. Let $a, \phi \in C^\infty ( \mathcal{X} ; \R)$ and denote 
\[
C_{\phi}: = \{ x \in \mathcal{X}: (X\phi)(x)  = 0, \forall X \in \Gamma(T\mathcal{X})\}
\]
to be the set of critical points of $\phi$. The function $\phi$ is said to be \textit{Bott-Morse} if $C_\phi$ is a submanifold of $\mathcal{X}$ and the transverse Hessian of $\phi$ denoted by $\textup{Hess}^\perp \phi$, is non-degenerate. 

\medskip
Consider the integral 
\begin{equation*}
    I(h) = \int_{\mathcal{X}} e^{ih\phi}a d\mu
\end{equation*}
with large parameter $h$. Let $\mathcal{Z}$ be a $k(\mathcal{Z})$-dimensional connected component of $C_\phi$, and let $y_1, \ldots, y_{N}$ be a system of coordinates on the normal bundle associated with $\phi$ defined by $y_1 = \ldots = y_{ k(\mathcal{Z})} = 0$.  The coordinates about a fixed point $x$ in $\mathcal{Z}$ are given by 
\[
y_1 = x_1 - \phi(x), \ldots, y_{k(\mathcal{Z})} = x_{k(\mathcal{Z})} - \phi(x), y_{k(\mathcal{Z})+1} = x_{ k(\mathcal{Z})+1}, \ldots, y_{N} = x_{N}.\]
With respect to the basis 
\[
\nu_{1} = \partial_{y_{k(\mathcal{Z})+1}}\big|_x,\ldots, \nu_{N - k(\mathcal{Z})} = \partial_{y_{N}}\big|_x 
\]
of the fibre $T_x\mathcal{X}/ T_x\mathcal{Z}$, the determinant of the transverse Hessian is 
\begin{align*}
    |\det \textup{Hess}^\perp\big|_{x} \phi| &= |d^2_p \phi (\nu_1 \wedge \ldots \wedge \nu_{N-k(\mathcal{Z})}, \nu_1 \wedge \ldots \wedge \nu_{N-k(\mathcal{Z})}  )| \\
    &= |\det (\partial^2_{y_i, y_j} \big|_{y=x}\phi )|.
\end{align*}
The density measure on $\mathcal{Z}$ is given by the quotient 
\[d\nu_{\mathcal{Z}} = \frac{d\mu}{|\det \textup{Hess}^\perp \phi|^{\frac{1}{2}}|dy_{k(\mathcal{Z})+1} \ldots   dy_N|} ~ .\]

\begin{lemma}\label{normalstatlemma}
If $\phi$ is a Bott-Morse function,  then 
\begin{equation}\label{normalstationary}
I(h) = \sum_{\mathcal{Z} \subset C_\phi }(2\pi h^{-1})^{\frac{N-k(\mathcal{Z})}{2}}
\left(e^{\frac{1}{4}i\pi \textup{sgn}(\mathcal{Z})}e^{ih\phi(\mathcal{Z})} \int_{\mathcal{Z}} ad\nu_{\mathcal{Z}} + O(h^{-1})\right),
\end{equation}
where $\textup{sgn}(\mathcal{Z})$ is the signature of the symmetric bi-linear form $D^2\phi \big|_{\mathcal{Z}}$. 

\end{lemma}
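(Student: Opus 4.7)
The plan is to reduce the Bott-Morse formula to the classical non-degenerate stationary phase lemma by localization and a transverse normal form for $\phi$ near each component of $C_\phi$. First I would choose a smooth partition of unity $\{\rho_{\mathcal{Z}}\}\cup\{\rho_0\}$ with each $\rho_{\mathcal{Z}}$ supported in a tubular neighborhood of a component $\mathcal{Z}\subset C_\phi$ and $\rho_0$ supported away from $C_\phi$. On the support of $\rho_0$ one has $d\phi\neq 0$, so the standard non-stationary phase argument (iterated integration by parts against the operator $L=(ih|d\phi|^2)^{-1}\langle d\phi,d\cdot\rangle$, which satisfies $L(e^{ih\phi})=e^{ih\phi}$) yields an $O(h^{-\infty})$ contribution. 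The problem reduces to analyzing $I_{\mathcal{Z}}(h)=\int e^{ih\phi}\rho_{\mathcal{Z}}\,a\,d\mu$ component by component.

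Next, on a tubular neighborhood of each $\mathcal{Z}$ I would invoke the Morse-Bott lemma to find coordinates $(z,y)$ with $z=(z_1,\ldots,z_{k(\mathcal{Z})})$ parametrizing $\mathcal{Z}$ and $y=(y_1,\ldots,y_{N-k(\mathcal{Z})})$ normal to it, such that $\mathcal{Z}=\{y=0\}$ and
\[
\phi(z,y)=\phi(\mathcal{Z})+\tfrac{1}{2}\,Q(z)[y,y],
\]
where $Q(z)$ is the transverse Hessian and $\phi|_{\mathcal{Z}}=\phi(\mathcal{Z})$ is constant by connectedness of $\mathcal{Z}$ together with $d\phi|_{\mathcal{Z}}=0$. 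Writing the density $d\mu=J(z,y)\,|dz\,dy|$, Fubini reduces $I_{\mathcal{Z}}(h)$ to an outer integral over $\mathcal{Z}$ of Gaussian oscillatory integrals in $y$.

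The inner $y$-integral is exactly of the form treated by the classical non-degenerate stationary phase lemma; with critical point $y=0$ it contributes, uniformly in $z$,
\[
(2\pi h^{-1})^{\frac{N-k(\mathcal{Z})}{2}}\,\frac{e^{\frac{i\pi}{4}\textup{sgn}\,Q(z)}}{|\det Q(z)|^{1/2}}\,a(z,0)\,J(z,0)+O\bigl(h^{-\frac{N-k(\mathcal{Z})}{2}-1}\bigr).
\]
Since $Q(z)$ varies continuously and is everywhere nondegenerate on the connected $\mathcal{Z}$, its signature equals the constant $\textup{sgn}(\mathcal{Z})$. Finally, by the definition of the quotient density
\[
d\nu_{\mathcal{Z}}=\frac{d\mu}{|\det\textup{Hess}^{\perp}\phi|^{1/2}\,|dy|},
\]
the remaining $z$-integral equals $\int_{\mathcal{Z}}a\,d\nu_{\mathcal{Z}}$; combining with the prefactor $e^{ih\phi(\mathcal{Z})}$ and summing over components yields \eqref{normalstationary}.

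The main obstacle is to verify that the leading coefficient is intrinsic: the combination $J(z,0)|\det Q(z)|^{-1/2}|dz|$ must be independent of the choice of Morse-Bott chart and equal to the canonical density $d\nu_{\mathcal{Z}}$ defined earlier. This reduces to a change-of-variables computation showing that the Jacobian between two Morse-Bott charts cancels precisely the ratio of the two transverse Hessian determinants, so the quotient of $d\mu$ by $|\det\textup{Hess}^{\perp}\phi|^{1/2}|dy|$ is a well-defined density on $\mathcal{Z}$. Once this coordinate-invariance is established, the uniform remainder from the classical lemma patches across the finitely many components of $C_\phi$ to yield the stated global expansion.
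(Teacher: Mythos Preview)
Your proposal is correct and follows the standard route (localization via partition of unity, Morse--Bott normal form in transverse coordinates, then the classical non-degenerate stationary phase in the $y$-variables). Note, however, that the paper does not actually prove this lemma: it merely recalls the statement and defers to \cite{GuSc13}, \cite{GuSc90}, \cite{Hor} for the proof, so there is no ``paper's own proof'' to compare against. Your argument is essentially the one found in those references.
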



\subsection{\normalsize  \textbf{Koopman operator}.}\label{KoopmanOp} Let $G^t_g$ be a contact Hamiltonian geodesic flow on the unit copshere bundle $S^*M$ of a compact Riemannian manifold $(M,g)$. The \textit{Koopman operator}, denoted by $\{V^t_g\}_{t\geq0}$, is  defined as the one-parameter group of  unitary operators on $L^2(S^*M)$ given by
\begin{equation*}
    V^t_g: L^2(S^*M,d\mu_L)  \rightarrow L^2(S^*M,d\mu_L); ~ V^t_g( f) = f \circ G^t_g .
\end{equation*}

$V^t_g$ is unitary because $\mu_L$ is preserved by $G^t_g$.


\subsection{\normalsize  \textbf{Guillemin flat trace.}} The Schwartz kernel of the Koopman operator is given by 
\begin{equation*}
    V^\bullet_g f (\xi) = \int_{S^*M} f(\eta) \delta(\eta- G^\bullet_g  \xi ) d\mu_L(\eta),
\end{equation*}
that is, the kernel is a $\delta$-distribution in the sense of \cite{G77} with support on the graph of $G^t_g$. 
(see Appendix \S \ref{deltadensity}).
Recall the diagonal map 
\begin{equation*}
    \Delta :  S_g^*M \times \R^+ \rightarrow  S_g^*M\times S_g^*M \times \R^+
\end{equation*} 
$(\zeta,t) \mapsto ( \zeta, \zeta,t)$ and 
$\Pi: S_g^*M \times \R^+ \rightarrow S_g^*M$ the projection map $(\zeta ,t)\mapsto \zeta$,
Assuming the diagonal map intersects the graph of $G^\bullet_g$ transversally, Guillemin defines the flat trace 
\begin{equation} \label{second}
    \textup{Tr}^\flat V^\bullet_g = \Pi_*\Delta^* k^\bullet
\end{equation}
in Theorem 6, \cite{G77} and shows that it is a well-defined $\delta$-distribution on $\R^+$. The transversality hypothesis is equivalent to the Lefschetz condition imposed on the flow $G_g^t$ on $S_g^*M$ under which, for each $t>0$, the left-hand side of \eqref{second} is the formula \eqref{Guilleminflat}.
\medskip

Under the general condition of clean intersection (\S \ref{cleanfix}), \eqref{second} is still a well-defined $\delta$- distribution on $\R^+$ (\S \ref{deltadensity}). 


\subsection{\normalsize \textbf{Metric on the cosphere bundle.}}\label{KaluzaKleinmetric}
Instead of defining the normal space as a quotient as in \eqref{normalPoincare}, we can define it using as the normal bundle with respect to the Kaluza-Klein metric on $S^*M$. The Kaluza-Klein metric is defined by equipping a connection $\nabla$ on the cosphere bundle $S^*M$, which then determines a splitting of the tangent bundle $TS^*M$ into horizontal and vertical component

\[TS^*M = T^h S^*M \oplus T^v S^*M \]
where $T^v S^*M \cong S^{n-1}$ is given the standard  metric, and the natural projection  $d\pi: T^hS^*M  \rightarrow TM$ is an isometry, and $T^hS^*M$ is the kernel of the connection. That is, let $x$ be a point on $M$, for each fibre element $\zeta \in \pi^{-1}(x) \subset  S^*_xM$, the components are identified via 
$T^v_{\zeta}S^*M = \textup{ker}(d\pi: T_\zeta S^* M \rightarrow T_x M)$ and $T^h_\zeta S^* M = \textup{ker}(\nabla_\zeta: T_\zeta S^*M \rightarrow S^*_xM)$. The metric on $T^h S^*M$ is the pullback metric
induced by $g$ on $M$.  

We define the normal bundle $N\textup{Fix}(G^t_g)$ along $\textup{Fix}(G^t_g)$ to be the orthogonal complement of $T\textup{Fix}(G^t_g)$. Identifying the normal bundle with the quotient induces a definition of $\textbf{P}^\#$ as a linear map 
\[\textbf{P}^\#_N : N\textup{Fix}(G^t_g) \rightarrow N\textup{Fix}(G^t_g).\]
We warn that the derivative of the geodesic flow $dG^t_g$ does not preserve the splitting via the Kaluza-Klein metric. For this reason, we do not use the Kaluza-Klein metric in this paper.

\section{\centering \sc{Proof of Proposition \ref{cleanprop}}}

\subsection{\normalsize  \textbf{Unitary intertwining operator}.}\label{UIO} For a Sunada pair $(M_i, g_i)$ as in the diagram of covers \eqref{maindiag}, there exists an unitary intertwining kernel $\mathcal{A}: L^2(G/H_1) \rightarrow L^2(G/H_2)$ between the isomorphic $G$-modules. The intertwining operator $\mathcal{A}$ is identified as a convolution kernel $A \in \mathbb{C}[H_2\backslash G/H_1])$, which is a function on the double coset space satisfying the identity 
$A(b_2ab_1) = \chi_2(b_2)A(a)\chi_1(b_1)$ for all $b_2\in H_2$, $b_1\in H_1$ [pg. 365, \cite{Ma78}], where $\chi_1$, $\chi_2$ are respectively the 1-dimensional characters of $H_1$, $H_2$.

For each $A$, S. Zelditch constructed a unitary operator given by the averaged weighted sum of Radon transforms
\begin{equation*}
    U_A = \frac{1}{\#H_1}\sum_{a\in G} A(a)\pi_{2*}T_a\pi_1^*: L^2(M_1, dx_1) \rightarrow L^2 (M_2,dx_2)
\end{equation*}
where $T_a$ is the translation by a group element $a$ on the finite covering space $M$ [pg. 709, \cite{Z92}].

We show that $U_A$ lifts to a unitary operator $\widetilde{U}_A$ between the space of $L^2$-distributions on $S_{g_i}^*M$ associated with the diagram of finite normal covers \eqref{2diag} which we repeat for clarity
\begin{equation*}
  \begin{tikzcd}
& S^*M \arrow[dl,swap,"\widetilde{\pi}_1"]  \arrow[dr,"\widetilde{\pi}_2"]\\
S^*(H_1\backslash M)  && S^*(H_2\backslash M)  
\end{tikzcd} 
\end{equation*}

\begin{proposition}\label{Radon}
The finite Radon transform $\widetilde{U}_A: L^2 (S^*M_1, d\mu_{L_1}) \rightarrow L^2 ( S^*M_2, d\mu_{L_2})$ given in \eqref{liftKoop} is a unitary operator mapping the space of $L^2$- distributions on $S^*M_1$ into the space of $L^2$-distributions on $S^*M_2$.
\end{proposition}
\begin{proof}
It is clear that $\pi_i : M \rightarrow M_i= H_i\backslash M$ determines the lifted cover $\widetilde{\pi_i} : S^*M \rightarrow S^*M_i$ given by the dual of the differentials ${d\pi_i}$ restricted to the unit tangent bundle. Thus, the $\pi_i$'s are basewise covers and cover the base maps.

For each $i$, we regard $L^2(S^*M_i)$ as the space $L^2(S^*M)^{H_i}$ consisting of $H_i$-invariants elements of $L^2(S^*M)$ 
since $S^*M_i$ is obtained via quotient the induced action of $H_i$ on $S^*M$. For $f \in L^2(S^*M)$, 
\begin{equation*}
\widetilde{\pi}_{i*}f(x',\xi') 
=\sum_{h\in H_i}(\widetilde{T}_h)^*f(x',\xi')=\sum_{h\in H_i} f(T_h(x),T^*_{h^{-1}}(\xi)).
\end{equation*}
For simple notation, let us write $\widetilde{\pi}_{i*} = \sum_{h\in H_i} \widetilde{T}_{h}$. Consider the product of $\widetilde{U}_A$ with its adjoint, by change of variables, 
\begin{align*}
 \widetilde{U}_A^\dagger\widetilde{U}_A &= \frac{1}{\# H_1\# H_2}\sum_{a,b \in G}\overline{A}(b^{-1})A(a)(\widetilde{\pi_1}_{*}\widetilde{T}_{b^{-1}}\widetilde{\pi_2}^*)(\widetilde{\pi_2}_{*}\widetilde{T}_{a}\widetilde{\pi_1}^*)\\
                            &=\frac{1}{\# H_1\# H_2}\sum_{a,b \in G}\overline{A}(b^{-1})A(a)[(\sum_{b_1\in H_1} \widetilde{T}_{b_1}\widetilde{T}_{b^{-1}})(\sum_{b_2\in H_2}\widetilde{T}_{b_2}\widetilde{T}_{a})]\\
                            &=\frac{1}{\# H_1\# H_2}\sum_{a,b\in G, b_1\in H_1, b_2\in H_2}\overline{A}(b^{-1})A(a)\widetilde T_{b_1b^{-1}b_2}\widetilde{T}_a\\ &=\frac{1}{\# H_1\# H_2}\sum_{b_1,b_2}\sum_{a,b}\overline{A}(b^{-1})A(a)\widetilde T_{\overline{b^{-1}}} \widetilde{T}_a    \text{~~~~~~~~~~~~~~~~~~~~~$(\overline{b^{-1}}:= b_1b^{-1}b_2)$}\\
                            &=\frac{1}{\# H_1\# H_2}\sum_{b_1,b_2}\sum_{a,b}\overline{A}(b^{-1})A(b^{-1}a)\widetilde T_{b^{-1}\overline{b^{-1}}a}\\
                            &=\frac{1}{\# H_1\# H_2}\sum_{b_2}\sum_{b_1}\sum_{a}A^*A(a)\widetilde{T}_{a} \text{~~~~~~~~~~~~~~~~~~~~~~~~~~~~~~$(a\mapsto b^{-1}\overline{b^{-1}}a)$} \stepcounter{equation}\\
                            &=\frac{1}{\# H_2}\sum_{b_2}\frac{1}{\# H_1}\sum_{a}\chi_{H_1}(a)\widetilde{T}_a \text{~~~~~~~~~~~~~~~~~~~~~~~~~~~~~~~~~~$(A$ is unitary)}\\ 
                            &=\frac{1}{\# H_1}\sum_{b_1}\widetilde{T}_{b_1} = \textup{Id}_{L^2(S^*M)^{H_1}}.
\end{align*}

\end{proof}


\subsection{ \normalsize \textbf{Gelfand-Leray form on a clean fixed Lagrangian component.}}\label{gelfandlerayformsection} We compute  $\Pi_*\Delta^*k^\bullet$ for the general case of clean fixed point sets.

Let $Z$ be a clean component of $\textup{Fix}(G^\tau_g)$. Let $\zeta$ be a point in $Z$ and $\gamma \in \mathscr{G}$ be a $\tau$-periodic orbit starting at $\zeta$.  Let $\zeta_1, \ldots, \zeta_{2n-1}$ be coordinates about $\zeta$ from which the normal bundle $NZ$ is described by $\zeta_1 = \ldots = \zeta_k =0$ with $k = \dim(Z)$, and the $\zeta_i$'s are defined by 
\begin{equation}\label{loccoord}
    \xi_1 - (G_g^\tau \xi)_1 = \zeta_1, \ldots , \xi_{k} - (G_g^\tau \xi)_{k} = \zeta_{k}, \xi_{k+1} = \zeta_{k+1},\ldots, \xi_{2n-1} = \zeta_{2n-1}. 
\end{equation}
Recall that 
the pullback under $\rho$ of a 1-form $\beta$ in $T^*S_g^*M$ supported on the graph of $G^\tau_g$ is 
\[((-dG^\tau_g)^*\beta, \beta) \in \rho^*T^*\big|_{\textup{Graph}(G^\tau_g)}S_g^*M,\] 
which is further identified with 
\[((-(dG^\tau_g)^\#)^*\beta, \beta) \xrightarrow[]{d\Delta^*}((I - dG^\tau_g )^\#)^* \beta \]
as an element of $\Delta^*\rho^* T^*\big|_{Z} \cong N^*Z$ with foot-points in $Z$.   
Since $|NZ| \otimes |T|Z = |T|S_g^*M$, the induced $\delta$-distribution on $Z$ is the Leray form
\begin{align*}
  d\nu_{Z}(\zeta) :
  &= \frac{d\mu_L(\zeta)}{((I - dG^t_g)^\#)^*|d\zeta_{k+1}\ldots d\zeta_{2n-1}|(\zeta)}\\
    &= \frac{d\mu_L(\zeta)}{|\det \mathbf{J}^\perp \big|_\zeta (I - G^\tau_g)||d\zeta_{k+1}\ldots d\zeta_{2n-1}|(\zeta)|}.
\end{align*}
With respect to the basis $\{\partial_{\zeta_{k +1} }\big|_\zeta, \ldots, \partial_{\zeta_{2n-1}}\big|_\zeta\}$, the transversal Jacobian is the matrix 
\begin{equation}\label{normalJacobian}
    \mathbf{J}^\perp \big|_\zeta (I - G^\tau_g)  = \begin{pmatrix}
\partial_{\zeta_{k+1}}\big|_\zeta(I - G^\tau_g)_{k+1} & \ldots & \partial_{\zeta_{k +1}}\big|_\zeta(I - G^\tau_g)_{\zeta_{2n-1}}\\
\vdots & & \vdots \\
\partial_{\zeta_{2n-1}}\big|_\zeta(I - G^\tau_g)_{k +1} & \ldots & \partial_{\zeta_{2n-1}}\big|_\zeta(I - G^\tau_g)_{2n-1}
\end{pmatrix}.
\end{equation}
Rewriting the row vectors in \eqref{normalJacobian} in terms of an orthonormal basis of the space of periodic $Z$-transversal Jacobi fields along $\gamma$ (\S \ref{canvolZj}) to obtain  
\[
 d\nu_Z = \frac{d\mu_L }{|\det(I - \textbf{P}^\#_Z)||d\zeta_{k+1}\ldots d\zeta_{2n-1}|}.
\] 
By integrating along the fibre $\Pi^{-1}(\zeta)$, it follows that
\begin{equation}\label{guilleminproof}
    \Pi_*\Delta^*k^\bullet = \int_{\Pi^{-1}(\textup{Fix}(G^\bullet_g))}|\Delta^*k^t\wedge dt|
= \sum_{\tau\in \textup{Lsp}(S^*M)}\delta(t- \tau) \sum_{Z \subset \textup{Fix}G^\tau_g} \int_Z d\nu_Z.
\end{equation}

\textbf{Remark.} If $\gamma$ is a non-degenerate orbit, we note that $\frac{d\mu_L}{|\dot{\gamma}|}$ is a volume element on the local transversal hypersurface $\Sigma$ to $\gamma$. Equivalently, this is the symplectic form $\dot{\gamma}\lrcorner d\mu_L$. As a result, 
\[
\Delta^*k^t\big|_\gamma= \frac{d\mu_L}{(I - dG^\tau_g)^* (\dot{\gamma}\lrcorner d\mu_L )} = \frac{|\dot{\gamma}|}{|\det(I - \textbf{P}_\gamma)|} \in |T|\gamma.
\]
In the presence of high-dimensional fixed point sets, the dimension of the normal space to the fixed point set decreases - hence the modified version of the Poincare map in the case of Bott-Morse non-degeneracy.


\subsection{\normalsize \textbf{Flat-trace distribution defined by oscillatory integral}.}\label{mollification} Recall that the flat-trace kernel at a fixed time $t$ is the $\delta$-distribution $\delta(\xi - G^t_g \xi)$.
Let us fix a finite smooth atlas on $S^*M$ with coordinate charts $(\Omega_i,\varphi_i)$ where the $\varphi_i$'s are local diffeomorphisms. Within these charts, $(\xi_1, \ldots, \xi_{N=2n-1})$ are introduced as in \eqref{loccoord}
where $\zeta_1, \ldots, \zeta_K$ represents local coordinates on a clean submanifold $Z$ associated with a length-spectral time, and $K $ is the dimension of $Z$.  

Consider a test function $\varrho \in \mathcal{D}(S^*M)$. Let $\{\rho_i \in C_c^\infty(\Omega_i; [0,1])\} $ be a corresponding partition of unity on $S^*M$ so that by the Fourier integral theorem, 

\begin{equation*}
        \varrho(\xi) = \sum_i \int_{\R^N}\int_{\varphi_i(\Omega_i)}\varphi_{i*}(\rho_i\varrho) (y) e^{i \langle  \theta, \varphi_i(\xi) - y\rangle } dy d\theta.
\end{equation*}
Also, for $\tau \in \textup{Lsp}(S^*M)$, $\xi \in \textit{supp}(\Omega_i)$, the flat-trace kernel of $V^\tau_g$ can be written as 
\begin{equation}\label{Fourier}\delta (\xi - G_g^\tau \xi) = (2\pi)^{-\frac{N - K}{2}}\int_{\R^N}\ e^{i\langle \theta  , \varphi_i(\xi) - \varphi_i(G^\tau_g\xi)  \rangle } d\theta. \end{equation}
in the sense of distributions. Pairing the kernel with $\varrho$ gives
\begin{align*}\left(k^\tau, \Delta_*\varrho \right) & =  \sum_i \int_{S^*M}\int_{\R^N} \int_{\varphi_i(\Omega_i)} \varphi_{i*}(\rho_i\varrho) (y) e^{i\langle \theta, \varphi_i(\xi) - y \rangle} \delta(\xi - G^\tau_g \xi) dyd\theta  d\mu_L(\xi).\end{align*}
The Guillemin flat-trace distribution at time $t =\tau$ has a global representation given by  
\begin{equation}\label{Fourier2}
    \left( \Pi_*\Delta^*k^\bullet\big|_\tau , \varrho \right) = (2\pi)^{-\frac{N-K}{2}} \sum_{i} \int_{\Omega_i} \int_{\R^N} \rho_i \varrho(G^\tau_g \xi)  e^{i\langle \theta , \varphi_i (\xi) - \varphi_i(G^\tau_g \xi) \rangle }d\theta d\mu_L(\xi).
\end{equation}

We define the change of variables $\kappa: S^*M \rightarrow \R^N$ piecewise-ly as follows 
$$\begin{cases}
    \xi \mapsto \varphi_i(\xi)  & \text{~ for~} \xi \in \Omega_i \cap Z^c \\
    \zeta \mapsto (0, \ldots, 0, \zeta_{K+1}, \ldots, \zeta_N) & \text{~ for~} \zeta \in Z = \text{Fix}G^\tau_g
\end{cases}.$$
The phase function $\phi : S^*M \times S^*M \times \R^N \rightarrow \R$ is defined by 
\begin{equation}\label{thephasefunction}
    \phi(\xi, \eta, \theta) : = \langle \kappa(\xi) - \kappa(\eta), \theta \rangle.
\end{equation}
Moreover, there exists a neighborhood $\Omega$ of the diagonal of $S^*M\times S^*M$ on which we can find a $C^\infty$-map $\Psi: \Omega \rightarrow GL(N, \R)$ satisfying 
\begin{equation}\label{cancelsdkappadxi}
    \phi(\xi, G^\tau_g\xi, \Psi(\xi, G^\tau_g\xi)\theta) = \langle \kappa(\xi) - \kappa(G^\tau_g \xi), \theta  \rangle
\end{equation} and 
\begin{equation}\label{det=1}
\det \Psi(\xi, G^\tau_g\xi) \det D^2_{\theta, \xi}\phi (\xi, G^\tau_g \xi, \theta)\big|_{\xi = G^\tau_g \xi} = 1.
\end{equation}
Let $\bar{\phi} (\xi, \theta; \tau) : = \phi(\xi, G^\tau_g\xi, \theta)$ to be the restriction of the phase function onto $S^*M \times \R^N$. The pairing at \eqref{Fourier2} becomes 



\begin{equation}\label{oscillatoryintegral1}
(2\pi)^{-\frac{N - K}{2}}\int_0^\infty r^{(N-K) -1} \int_{\mathbb{S}^{N-1}} \int_{S^*M} \varrho (\xi) e^{i r \bar{\phi} (\xi, \bar{\theta}; \tau)} |\det(D_\xi \kappa)|   d\xi d\sigma (\bar{\theta}) dr.
\end{equation}
Integrating by parts with respect to the radial variable produces an integrand of the form 
\begin{equation}\label{innerprodwithsphereint}
J_\tau(r): =  \int_{\mathbb{S}^{N-1}} \int_{S^*M}\varphi(\xi) \bar{\phi}(\xi, \bar{\theta}; \tau)   e^{i r \bar{\phi}(\xi, \bar{\theta}; \tau) }d\xi d\sigma(\bar{\theta})
\end{equation}
where $\varphi$ is a scalar function with compact support on $S^*M$ depending on $\kappa, \varrho$. By using coordinate charts and a finite partition of unity on $\mathbb{S}^{N-1}$, we can write \eqref{innerprodwithsphereint} as 
$$
 \int_{\R^{N-1}} \int_{S^*M} a(\theta') \varphi(\xi)\check{\phi}(\xi, \theta'; \tau) e^{i r \check{\phi}(\xi, \theta'; \tau)}d\xi d\theta' 
$$
with $a\in C^\infty_c(\R^{N-1}\backslash\{0\})$ supported near the origin with $D_{\theta'}a  \neq 0$ and $\check{\phi}$ is the phase function set by 

$$\check{\phi}(\xi, \theta'; \tau) = (\xi' - (G^\tau_g \xi)' ) \cdot \theta' + ( \xi_N - (G^\tau_g\xi)_N) (1 - |\theta'|^2)^{\frac{1}{2}}$$ 
where $\kappa(\xi) - \kappa(G^\tau_g \xi)  = (\xi' - (G^\tau_g \xi)', \xi_N - (G^\tau_g\xi)_N) \in \R^{N-1} \times \R$. With $\xi' = (\xi_1, \ldots, \xi_{N-1})$, $\theta' = (\theta'_1,\ldots, \theta'_{N-1})$, we have 
$$
\partial_{\xi_j} \check{\phi} = \sum_{i=1}^{N-1} \theta'_i(\delta^i_j  - \partial_{\xi_j}(G^\tau_g\xi)_i ) - \partial_{\xi_j}(G^\tau_g \xi)_N (1 - |\theta'|^2)^{\frac{1}{2}} \text{~ for $j = 1,\ldots, N-1$},
$$
$$
\partial_{\xi_N} \check{\phi} = - \theta' \cdot D_{\xi_N}(G^\tau_g \xi)' + (1- \partial_{\xi_N}(G^\tau_g \xi)_N) (1 - |\theta'|^2)^{\frac{1}{2}}, 
$$
$$
\partial_{\theta'} \check{\phi} = \xi' - G^\tau_g \xi' - \frac{\xi_N - (G^\tau_g \xi)_N}{(1 - |\theta'|^2)^{\frac{1}{2}}}\theta'
$$
which vanish at $\theta' = 0$ when $ G^\tau_g \xi  = \xi $. Furthermore, 
$$D^2_{\xi' \xi '} \check{\phi} = -\theta' \cdot D^2_{\xi' \xi '} (G^\tau_g\xi)' - D^2_{\xi' \xi '} (G^\tau_g \xi)_N(1 - |\theta'|^2)^{\frac{1}{2}} ,$$
$$ \partial^2_{\xi_j\theta'_k} \check{\phi} = \sum_{i = 1}^{N-1} \delta^i_k(\delta^i_j - \partial_{\xi_j}(G^\tau_g\xi)_i) + \frac{\partial_{\xi_j}(G^\tau_g\xi)_N}{(1 - |\theta'|^2)^{\frac{1}{2}}}\theta'_k \text{~ for $k = 1,\ldots, N-1$},$$
$$D^2_{\theta'\theta'}\check{\phi} = (G^\tau_g\xi)_N - \xi_N .$$ 
Let $(\zeta, 0)$ be a critical point, $\zeta \in \textup{Fix}G^\tau_g$. The normal Hessian of $\check{\phi}$ at $(\zeta, 0)$ is the $2(N-K) -1 \times 2(N- K) -1$ - matrix 
\[ \text{Hess}^\perp\big|_{(\zeta,0)}\check{\phi}  = \begin{pmatrix}\mathbf{D^2}^\perp_{\xi\xi}\check{\phi} \big|_{(\zeta, 0)} & \mathbf{D^2}^\perp_{\xi\theta'}\check{\phi}\big|_{(\zeta, 0)} \\&\\ \mathbf{D^2}^\perp_{ \theta'\xi}\check{\phi} \big|_{(\zeta, 0)}& \mathbf{D^2}^\perp_{\theta'\theta'}\check{\phi}\big|_{(\zeta, 0)}\end{pmatrix}\]
where 
\[\mathbf{D^2}^\perp_{\xi\xi}\check{\phi} \big|_{(\zeta, 0)} = \begin{pmatrix}
-\partial^2_{\xi_j\xi_k}\big|_{\zeta}(G^\tau_g\xi)_N    
\end{pmatrix}_{N - K \times N - K },\]

\[ \mathbf{D^2}^\perp_{\theta' \xi}\check{\phi} \big|_{(\zeta, 0)} = \mathbf{D^2}^\perp_{\xi\theta'}\check{\phi}^\top \big|_{(\zeta, 0)} =  \begin{pmatrix}
    \delta^k_j - \partial_{\xi_k}\big|_{\zeta}(G^\tau_g \xi)_j
\end{pmatrix}_{N - K -1 \times N - K}, \]

\[
\mathbf{D^2}^\perp_{\theta'\theta'}\check{\phi} \big|_{(\zeta, 0)}= \mathbf{O}_{N- K-1 \times N - K-1}.
\]
By Schur-complement formula, $\left | \det \text{Hess}^\perp\big|_{(\zeta, 0)} \check{\phi} \right | $ equates to 
\begin{equation}\label{detofhessianofcheckphi}
\left| \det \mathbf{D^2}^\perp_{\xi\xi}\check{\phi}\big|_{(\zeta, 0)}  \right| \left | \det \left ( \mathbf{D^2}^\perp_{\theta' \xi}\check{\phi}\big|_{(\zeta, 0)} \cdot \mathbf{D^2}^\perp_{\xi\xi}\check{\phi} ^{-1}\big|_{(\zeta, 0)} \cdot \mathbf{D^2}^\perp_{\xi\theta'}\check{\phi}\big|_{(\zeta, 0)} \right) \right |.
\end{equation}

\begin{proposition}\label{Jterm}
\[J_\tau(r)  = O(r^{-(N - K +\frac{3}{2})}) \text{~~ for $r \gg 1 $}.\]
\end{proposition}

\begin{proof}

Let $u(\xi, \theta') :  = a(\theta')\varphi(\xi)\check{\phi}(\xi,\theta';\tau) \in C^{2k}_c(\text{spt}(a))$, $k \geq 3$. We apply the method of stationary phase in the $2(N - K) -1$ normal dimensions to obtain the estimation [cf. Theorem 7.7.5, \cite{Hor}] 
\begin{equation}\label{Hormanderstationaryphase}
\left| J_\tau(r) - \int_Z \frac{e^{ir\check{\phi}(\zeta, 0 ;\tau)}}{\det((2\pi i)^{-1}r\text{Hess}^\perp\big|_{(\zeta,0)}\check{\phi})^{\frac{1}{2}}}d\zeta . \sum_{m < k}r^{-m}L_mu \right| \leq Cr^{-k}\sum_{|\alpha| \leq 2k } \sup|D^{\alpha}u|
\end{equation}
for a constant $C$, 
\begin{equation}\label{L_mu}
L_mu = \sum_{\nu-\mu = m}\sum_{2\nu\geq 3\mu} i^{-l}2^{-\nu}\langle \nabla^2 \check{\phi}^{-1}\big|_{(\zeta,0)} D , D \rangle^\nu (h^\mu_{(\zeta,0)}u)(\zeta,0) / \mu!\nu!\end{equation}
where
\begin{align*}
h_{(\zeta, 0)} (\xi, \theta') &= \check{\phi}(\xi, \theta'; \tau) -\frac{1}{2}\sum_{i,j = 1}^N \partial^2_{\xi_i\xi_j}\big|_{\zeta}(G^\tau_g\xi)_N (\xi_i - \zeta_i)(\xi_j - \zeta_j) \\
&+ \sum_{i=1}^N\sum_{j=1}^{N-1}\left(\delta^i_j - \partial_{\xi_i}\big|_{\zeta}(G^\tau_g \xi)_j \right)(\xi_i - \zeta_i)\theta'_j\end{align*}
and \begin{align*}
    \langle \nabla^2 \check{\phi}^{-1}\big|_{(\zeta,0)} D, D  \rangle  &= \frac{1}{\left | \det \text{Hess}^\perp\big|_{(\zeta, 0)} \check{\phi} \right |} \bigg( -\sum_{i,j =1}^N \partial^2_{\xi_i\xi_j}\big|_{\zeta}(G^\tau_g\xi)_N \partial^2_{\xi_i\xi_j}\\
    &+ 2 \sum_{i,j =1 }^{N-1} \left(\delta^i_j - \partial_{\xi_j}\big|_{\zeta}(G^\tau_g \xi)_i\right) \partial^2_{\theta'_i\xi_j} - \partial_{\xi_N}\big|_{\zeta}(G^\tau_g \xi)_{N-1}\partial^2_{\xi_N\theta'_{N-1}} \bigg).\\
\end{align*}
The coefficients $L_mu$ vanish up to the second order. Indeed, we first observe that $L_0u = 0$, 
\begin{equation}\label{L_1u}
L_1u = \sum_{(\mu,\nu) \in \{(0,1),(1,2),(2,3)\}} i^{-m}2^{-\nu}\langle \nabla^2 \check{\phi}^{-1} \big|_{(\zeta,0)}D , D \rangle^\nu (h^\mu_{(\zeta,0)}u)(\zeta,0) / \mu!\nu!. \end{equation}
Since
\begin{align*}
\partial^2_{\theta'_i\xi_j}u(\zeta, 0) &= a'_i(0) \left(\check{\phi}(\zeta, 0;\tau)\varphi'_{\xi_j}(\zeta) + \varphi(\zeta)\partial_{\xi_j}\check{\phi}(\zeta, 0;\tau)\right) \\
&+a(0)\left( \varphi'_{\xi_j}(\zeta)\partial_{\theta'_i}\check{\phi}(\zeta,0;\tau) + \varphi(\zeta)\partial^2_{\theta'_i\xi_j} \check{\phi}(\zeta,0;\tau) \right) = 0
\end{align*}
for $i = 1, \ldots, N-1$, $j = 1, \ldots, N$,
\[\partial^2_{\xi_i\xi_j}u(\zeta, 0) = a(0)\partial^2_{\xi_i\xi_j}\varphi(\zeta)\check{\phi} (\zeta, 0; \tau)= 0, \]
the summand indexed by $(0,1)$ in \eqref{L_1u}  vanishes. For $(\mu,\nu) = (1,2)$, the fourth-order partial derivatives 
$$\partial^4_{\xi_{i,j,k,l}}h_{(\zeta,0)}u = a \partial^4_{\xi_{i,j,k,l}}\varphi \check{\phi}h_{(\zeta, 0)},$$
\begin{align*}
    \partial^4_{\theta'_i\xi_j\theta'_k\xi_l}h_{(\zeta,0)}u &= a\big( \varphi\partial^4_{\theta'_i\xi_j\theta'_k\xi_l} +     \varphi'_j\partial^3_{\theta'_i\xi_l\theta'_k} 
    + \varphi''_{\xi_{j,l}}\partial^2_{\theta'_{i,k}} + \varphi'_l\partial^3_{\theta'_i\xi_j\theta'_k}  \big)  \check{\phi}h_{(\zeta, 0)}\\  &+a'_i\big(\partial^2_{\xi_{j,l}}\varphi\partial_{\theta'_k} +
    \varphi'_l\partial^2_{\xi_j\theta'_k} + \varphi'_j\partial^2_{\xi_l\theta'_k} + \varphi\partial^3_{\xi_j\theta'_k\xi_l}  \big)\check{\phi}h_{(\zeta, 0)} \\
    &+ a''_{i,k}\partial^2_{\xi_{j,l}}\varphi \check{\phi}h_{(\zeta, 0)},
\end{align*}
\begin{align*}
    \partial^4_{\xi_i\xi_j\theta'_k\xi_l}h_{(\zeta,0)}u &= a \big(  \varphi'''_{i,j,l}\partial_{\theta'_k} + \varphi''_{j,l}\partial^2_{\xi_i\theta'_k} + \varphi'_l\partial^3_{\xi_{i,j}\theta'_k} \\
    &+\varphi'_i\partial^3_{\xi_j\theta'_k\xi_l}  + \varphi''_{i,l}\partial^2_{\xi_j\theta'_k} + \varphi\partial^4_{\xi_{i,j}\theta'_k\xi_l}\big)\check{\phi}h_{(\zeta, 0)} \\
    &+ a'_k\partial^3_{\xi_{i,j,l}}\check{\phi}h_{(\zeta, 0)} 
\end{align*}
with $1 \leq j,l \leq N$, $1 \leq i,k \leq N-1$
contain factors of zeroth order of $a$, zeroth and first orders of $\check{\phi}$ and $h_{(\zeta,0)}$, which vanish at $(\zeta, 0)$. Hence, 
$$
\langle \nabla^2 \check{\phi}^{-1}\big|_{(\zeta,0)} D , D \rangle^2 (h_{(\zeta,0)}u)(\zeta,0)= 0.
$$
For $(\mu,\nu) = (2,3)$, the sixth orders are  
\[\partial^6_{\xi_{i,j,p,q,m,n}}(h^2_{(\zeta,0)}u)  = a \partial^6_{\xi_{i,j,p,q,m,n}}(\varphi\check{\phi}h^2_{(\zeta,0)}),\]
\begin{align*}
    \partial^6_{\xi_{i,j,p,q}\theta'_k\xi_l}(h^2_{(\zeta,0)}u) &= a \big(\partial^4_{\xi_{i,j,p,q}}(\varphi'_l\partial_{\theta'_k}) + \partial^4_{\xi_{i,j,p,q}}(\varphi\partial^2_{\theta'_k\xi_l}) \big) \check{\phi}h^2_{(\zeta,0)} \\
    & + a'_k \partial^4_{\xi_{i,j,p,q}} \varphi'_l \check{\phi}h^2_{(\zeta,0)}, 
\end{align*}
\begin{align*}
     \partial^6_{\xi_{i,j}\theta'_k\xi_l\theta'_p\xi_q}(h^2_{(\zeta,0)}u) &= \partial^2_{\xi_{i,j}} \big( a(\varphi'_l\partial^3_{\theta'_{k,p}\xi_q}+ \varphi\partial^4_{\theta'_k\xi_l\theta'_p\xi_q}+ \varphi''_{l,q}\partial^2_{\theta'_{k,p}} + \varphi'_q\partial^3_{\theta'_k\xi_l\theta'_p}) \\
&+a'_p(\varphi'_l\partial^2_{\theta'_k\xi_q}+\varphi\partial^3_{\theta'_k\xi_{l,q}}+ \varphi''_{l,q}\partial_{\theta'_k}+\varphi'_q\partial^2_{\theta'_k\xi_l})\\
& + a'_k(\varphi_l\partial^2_{\theta'_p\xi_q}+ \varphi\partial^3_{\xi_l\theta'_p\xi_q} + \varphi''_{l,q}\partial_{\theta'_p}+ \varphi'_q\partial^2_{\xi_l\theta'_p}) \\
& + a''_{k,p}(\varphi'_l \partial_{\xi_q} + \varphi\partial^2_{\xi_{l,q}} + \varphi''_{l,q} + \varphi'_q\partial_{\xi_l})\big) \check{\phi}h^2_{(\zeta,0)}
\end{align*}
with $\partial^2_{\xi_{i,j}} (a\varphi\partial^4_{\theta'_k\xi_l\theta'_p\xi_q}\check{\phi}h^2_{(\zeta,0)}) = a\partial^2_{\xi_{i,j}}(\varphi \partial^4_{\theta'_k\xi_l\theta'_p\xi_q}\check{\phi}h^2_{(\zeta,0)})$ and $1\leq i,j,l,q \leq N, ~ 1 \leq k,p \leq N-1$, 
\begin{align*}
\partial^6_{\theta'_k\xi_l\theta'_p\xi_q\theta'_r\xi_s}(h^2_{(\zeta,0)}u) & = \partial^2_{\theta'_k\xi_l}\big( a (\varphi''_{s,q}\partial^2_{\theta'_{r,p}} + \varphi'_s\partial^3_{\theta'_p\xi_q\theta'_r}+ \varphi'_q\partial^3_{\theta'_{r,p}\xi_s} +\varphi\partial^4_{\theta'_p\xi_q\theta'_r\xi_s})  \\
& + a'_r(\varphi_q\partial^2_{\theta'_p\xi_s} + \varphi\partial^3_{\theta'_p\xi_{q,s}} + \varphi''_{q,s}\partial_{\theta'_p} + \varphi'_s\partial^2_{\theta'_p\xi_q}) \\
&  + a'_p(\varphi''_{s,q}\partial_{\theta'_r} + \varphi'_s\partial^2_{\xi_q\theta'_r}+ \varphi'_q\partial^3_{\theta'_r\xi_s} + \varphi\partial^3_{\xi_q\theta'_r\xi_s}) \\ 
& + a''_{r,p}(\varphi'_q\partial_{\xi_s} + \varphi\partial^2_{\xi_{q,s}} + \varphi''_{q,s} + \varphi'_{s}\partial_{\xi_q}) \big) \check{\phi}h^2_{(\zeta,0)}
\end{align*}
with 
\begin{align*}\partial^2_{\theta'_k\xi_l} (a\varphi\partial^4_{\theta'_p\xi_q\theta'_r\xi_s}\check{\phi}h_{(\zeta,0)}) &= \big ( a (\varphi'_l \partial^5_{\theta'_k\xi_q\theta'_r\xi_s}+ \varphi \partial^6_{\theta_k\xi_l\theta_p\xi_q\theta'_r\xi_s}) \\
&+  a'_k (\varphi'_l\partial^4_{\theta'_p\xi_q\theta'_r\xi_s} + \varphi \partial^5_{\xi_l\theta'_p\xi_q\theta'_r\xi_s}) \big) \check{\phi}h^2_{(\zeta,0)}
\end{align*}
and $1 \leq l,q,s \leq N,~ 1 \leq k,p,r \leq N-1 $. Notice that any less-than-six partial orders of $\check{\phi}h^2_{(\zeta, 0)}$ contain factors of zeroth and/or first orders of $\check{\phi}$ and $h_{(\zeta,0)}$, which implies  
\[
\langle \nabla^2 \check{\phi}^{-1}\big|_{(\zeta,0)} D , D \rangle^3 (h^2_{(\zeta,0)}u)(\zeta,0)= 0
\]
and hence $L_1 u = 0 $. 

When $m=2$, in the case of $(\mu,\nu) = (0,2)$, respectively $(1,3)$, terms of the form 
\[a'_k\varphi\partial^3_{\xi_{i,j,l}}\check{\phi}, ~ a'_k\varphi'_l\partial^2_{\xi_{i,j}}\check{\phi}\partial^2_{\xi_{p,q}}h_{(\zeta,0)} \]
are non-vanishing on the clean submanifold and hence $L_2 u$ is non-zero. From \eqref{detofhessianofcheckphi}, \eqref{Hormanderstationaryphase}, 
\begin{align*}
J_\tau(r) & = (2\pi i r^{-1})^{N - K - \frac{1}{2}}\int_{Z}\frac{d\zeta}{( \det \text{Hess}^\perp\big|_{(\zeta, 0)} \check{\phi})^{\frac{1}{2}} } ~ O(r^{-2})= O (r^{-(N-K) -\frac{3}{2}})              
\end{align*}
as $r \rightarrow \infty$.

\end{proof}


\subsection{\normalsize \textbf{Proof of Proposition \ref{cleanprop}}. }\label{proofcleanprop}
We provide a proof of Proposition \ref{cleanprop} using the method of stationary phase in \S \ref{stationaryphase}. The justification of \eqref{sumflattrace} following the same outline as in [Theorem 8, \cite{G77}] is given in \S \ref{gelfandlerayformsection}.
\begin{proof}
Let $\varrho \in \mathcal{D}(S^*M)$ be a test function and $\gamma$ be a $\tau$-periodic trajectory starting at a point $\zeta$ on a clean component $Z\subset \text{Fix}G^\tau_g$ of dimension $K$. $\zeta_1, \ldots, \zeta_N$ are coordinates about $\zeta$ on $S^*M$ as described in \eqref{loccoord}. 

The orthonormal basis for $T_{\zeta}S^*_gM/ T_{\zeta}Z$ is given by $\{\partial_{\zeta_{K+1}}\big|_\zeta,\ldots, \partial_{\zeta_N}\big|_\zeta\}$. Consider the phase function $\phi$ given in \eqref{thephasefunction}, the critical submanifold of $\phi$ is the joint space 
$$
\text{Crit}(\phi) = \{(\zeta, \bar{\theta}): \bar{\theta} =(\theta_1,\ldots, \theta_K,  0,\ldots, 0) \} \subset Z\times \R^N.
$$
The transverse Hessian of $\phi$  with respect to the basis of the normal quotient space is the block-matrix 
\[
\text{Hess}^\perp \big|_{(\zeta, \bar{\theta})} \phi   = \begin{pmatrix}
\mathbf{D^2}^\perp_{\xi\xi} \big|_{(\zeta, \bar{\theta})} \phi  & \mathbf{D^2}^\perp_{\xi\theta} \big|_{(\zeta, \bar{\theta})}\phi \\
&\\
\mathbf{D^2}^\perp_{\theta\xi}  \big|_{(\zeta, \bar{\theta})} \phi & \mathbf{D^2}^\perp_{\theta\theta} \big|_{(\zeta, \bar{\theta})} \phi
\end{pmatrix}
\]
where 
\begin{align*}
\mathbf{D^2}^\perp_{\xi\theta} \big|_{(\zeta, \bar{\theta})} \phi = \mathbf{D^2}^\perp_{\theta\xi}\big|_{(\zeta, \bar{\theta})} \phi 
&= \begin{pmatrix}
\delta^l_j - \partial_{\zeta_j}\big|_{\zeta}(G^\tau_g \xi)_l \\
\end{pmatrix}_{N - (K+1) \times N -(K+1)} , 
\end{align*}

\[ \mathbf{D^2}^\perp_{\xi\xi} \big|_{(\zeta, \bar{\theta})} \phi = 
\begin{pmatrix}
    - \theta \cdot D^2_{\zeta_j\zeta_l}\big|_{\zeta} G^\tau_g \xi
\end{pmatrix} =  \mathbf{O}_{N -(K+1) \times N -(K+1)} = 
\mathbf{D^2}^\perp_{\theta\theta} \big|_{(\zeta, \bar{\theta})} \phi.  \]
The diagonal blocks vanish due to geodesicity of $\gamma$ and linearity of $\phi$ in $\theta$. The off-diagonal sub-matrices with respect to an orthonormal basis $U_i,V_i$ of $\J^\perp_Z(\gamma)$ corresponding to $(\ast)$ as in \eqref{Jbasis} are precisely 
\[\begin{pmatrix}
    (\delta^i_l - \langle \partial_{\zeta_i}\big|_{\zeta}, U_l(\tau)\rangle)_{i,l} & (\langle \partial_{\zeta_i}\big|_{\zeta}, V_l(\tau) \rangle)_{i,l}\\
    & \\
    (\langle\partial_{\zeta_i}\big|_{\zeta}, \nabla_{\dot\gamma}U_l(\tau)\rangle)_{i,l} & (\delta^i_l - \langle \partial_{\zeta_i}\big|_{\zeta}, \nabla_{\dot\gamma}V_l(\tau) \rangle)_{i,l}
    \end{pmatrix}.\]
\medskip
since the derivatives $\{\partial_{\zeta_i} G^t_g \}_{K+1 \leq i \leq N}$ of the geodesic are Jacobi fields. 
This implies 
\[|\det \text{Hess}^\perp\big|_{(\zeta, \bar{\theta})} \phi| = |\det (I - \textbf{P}_Z^\#)_\zeta|^2\]
for all $\bar{\theta}$. By applying Proposition \ref{Jterm} and the joint-normal stationary phase in light of Lemma \ref{normalstatlemma}, integration by parts in \eqref{oscillatoryintegral1} yields
\begin{align*}\label{semiclassicalinr}
 \left( \Pi_*\Delta^*k^\bullet\big|_\tau , \varrho \right) 
& =\lim_{r \rightarrow \infty} \frac{r^{N-K}}{(2\pi)^{\frac{N-K}{2}}}
    \int_{S^*M}\int_{\R^N}\varrho(\xi) e^{ir\phi(\xi, G^\tau_g \xi, \bar{\theta})} d\mu_L(\xi)d\bar{\theta} -i\int_0^\infty r^{N-K}J_\tau(r) dr \\
&= \int_Z \varrho (\zeta) \frac{d\mu_L(\zeta)}{|\det\text{Hess}^\perp\big|_{(\zeta, \bar{\theta})} \phi|^{\frac{1}{2}}|d\zeta_{K+1} \ldots d\zeta_N|}  - i\int_{\{r\gg 1\}} r^{N-K}J_\tau(r)  dr\\
& = \int_Z  \varrho (\zeta) \frac{ |d\textup{Vol}_{can}(Z)|(\zeta)}{|\det (I - \textbf{P}_Z^\#)_\zeta|}. 
\end{align*}
Consequently, for any test function $f \in C^\infty_c((0,\infty))$, we have 
\begin{align*}
    \left(\textup{Tr}^\flat V^\bullet_g, f\right) &= \sum_{\tau \in \textup{Lsp}S^*_gM} f(\tau) \int_{S^*M} \delta(\xi - G^\tau_g\xi) d\mu_L(\xi)\\
    &= \sum_{\tau \in \textup{Lsp}S^*_gM} f(\tau)  \int_Z \frac{ |d\textup{Vol}_{can}(Z)|(\zeta)}{|\det (I - \textbf{P}_Z^\#)_\zeta|}.
\end{align*}
This implies 
\[\textup{Tr}^\flat V^t_g   =  \sum_{\tau \in \textup{Lsp}S^*_gM}\sum_{Z \subset \textup{Fix}G^\tau_g}   \delta(t-\tau) \int_Z \frac{ |d\textup{Vol}_{can}(Z)|(\zeta)}{|\det (I - \textbf{P}_Z^\#)_\zeta|}.    \]

\end{proof}



\section{\centering \sc{Proof of theorem 1.1}}\label{proofofsameflattrace}
\begin{proof}Recall the Radon transform $\widetilde{U}: L^2(S^*M_1) \rightarrow L^2(S^*M_2)$ and its adjoint inverse 
$$
\widetilde{U} = \frac{1}{\#H_1}\sum_{a\in G}A(a) \widetilde{\pi_2}_*T_a\widetilde{\pi_1}^*, ~\widetilde{U}^\dagger =  \frac{1}{\#H_2}\sum_{b \in G}A(b^{-1}) \widetilde{\pi_1}_*T_{b^{-1}}\widetilde{\pi_2}^*,
$$
the pushforward Liouville volume density on $S^*M_2$ is 
\begin{align*}
\widetilde{U}_* d\mu_{L_1} &= \widetilde{U}^{\dagger *} d\mu_{L_1}   \\
&=\frac{1}{\#H_2}\sum_{h_2\in H_2}\sum_{b\in G} A(b^{-1}) (d\widetilde{\pi_1}_*)^*(dT_{b^{-1}})^* (dT_{h_2})^*d\mu_L\\
&=\frac{1}{\#H_2}\sum_{h_2\in H_2}\sum_{b\in G} A(b^{-1}) (d\widetilde{\pi_1}_*)^*(dT_{b^{-1}h_2})^*d\mu_L\\
&= \frac{1}{\#H_2}\sum_{h_2\in H_2} (dT_{h_2})^*d\mu_L=  d\mu_{L_2}. 
\end{align*}
Likewise, 
\begin{equation}\label{pushforwardmu_2}
    \widetilde{U}^\dagger_{~*}d\mu_{L_2} = d\mu_{L_1}.
\end{equation}

Fix $t\in \R^+$, let $u,v \in   \mathcal{D}(S^*M_1)$, and $k^t_1$, $k^t_2$ be respectively the distributional Schwartz kernel of the transfer operators $V^t_{g_1}$, $V^t_{g_2}$. By Theorem \eqref{Vtheorem},
\begin{align*}
    \left( k_1^t , u\otimes v \right) &= \left(V_{g_1}^t v ,u \right)_{L^2(S^*M_1,d\mu_{L_1})}\\
    &= \left(\widetilde{U}^*V_{g_2}^t\widetilde{U}v , u \right)_{L^2(S^*M_1,d\mu_{L_1})}\\
    &= \left( V_{g_2}^t\widetilde{U}v , \widetilde{U}u \right)_{L^2(S^*M_2,d\mu_{L_2})}
=\left( k_2^t, \widetilde{U}u \otimes \widetilde{U}v\right) . 
    \end{align*}
Let $\Delta_i: S^*M_i \rightarrow S^*M_i \times S^*M_i,~ i =1,2$ be the diagonal maps. It follows that
\begin{align*}
    \left(\widetilde{U}^{\dagger *}\Delta_1^*k^t_1, \widetilde{U}\phi\right) =\left(\Delta_1^*k^t_1, \phi\right) &= \left(k^t_1, \Delta_{1*}\phi\right)\\
    &= \left(k^t_2, \widetilde{U}\Delta_{1*}\phi\right)\\
    &= \left(k^t_2, \Delta_{2*}\widetilde{U}\phi\right) = \left(\Delta_2^*k^t_2, \widetilde{U}\phi\right)
\end{align*}
for any $\phi \in \mathcal{D}(S^*M_1)$. By unitarity, 
\begin{equation}\label{pushforwarddeltak_1}\widetilde{U}^{\dagger *}\Delta_1^*k^t_1  =  \Delta_2^*k^t_2 \end{equation}
as a $\delta$-distribution on $S^*M_2$.

Denote $\Pi_i: S^*M_i \times \R^+ \rightarrow S^*M_i,~i   =1,2$ to be the natural projections. For any $\varphi \in \mathcal{D}(\R^+)$, we combine \eqref{pushforwardmu_2} and \eqref{pushforwarddeltak_1} to obtain
\begin{align*}
\left(\Pi_{1*}\Delta_1^*k_1^\bullet, \varphi\right) = \left((\widetilde{U}^{\dagger}_*\Pi_2)_*\Delta_1^*k_1^\bullet, \varphi \right) =  \left(\Pi_{2*}\widetilde{U}^{\dagger*}\Delta_1^*k_1^\bullet , \varphi\right) =\left(\Pi_{2*}\Delta_2^*k_2^\bullet  , \varphi\right)
\end{align*}
and hence $\textup{Tr}^\flat V^\bullet_{g_1} = \textup{Tr}^\flat V^\bullet_{g_2}$ as $\delta$-distributions on $\R^+$.

\end{proof}


\section{\centering \sc{ Proof of theorem 1.2}}

To establish Theorem \ref{Vtheorem}, our goal is to show that $\widetilde{U}_A$ satisfies  $\widetilde{U}_AV_{g_1}^t=V_{g_2}^t\widetilde{U}_A$. 
\medskip
\begin{proof}
Let $\Xi_{H^i}$, $i=1,2$, respectively, $\Xi_H$ be the contact Hamiltonian fields on each $S^*M_i$ and $S_g^*M$. Observe that $V^t_{g_i}$ can be obtained via conjugating $V_g^t$ with the pull-back $\widetilde{\pi}^*_i$. Indeed, given any point $\widetilde{\zeta}$ on $S^*M$, by uniqueness of the local geodesic flow, 
$f(\text{exp}(t\Xi_H(\widetilde{\pi}_i(\widetilde{\zeta})))
    = f(\text{exp}(t\Xi_{H^i}(\widetilde{\pi}_i(\widetilde{\zeta})))$. Hence,  
\begin{equation*}
    V_g^t(\widetilde\pi_i^*f)(\widetilde\zeta) 
    = (\widetilde \pi_i^*f)\circ G^t_g(\widetilde\zeta) =
    f(\text{exp}(t\Xi_{H^i}(\widetilde\pi_i(\widetilde\zeta)))
    = \widetilde\pi_i^*V_{g_i}^tf(\widetilde\zeta)     
\end{equation*}
for any $f \in L^2(S^*_gM)$.

\smallskip

Similarly, the conjugation applies with the push-forward operator $\pi_{i_*}$. Indeed, let $\zeta'$ be a point on $S_{g_i}^*M$. For $f$ an $L^2$-function on $S_g^*M$, we have

\begin{equation*}
    \widetilde{\pi}_{i*}V_g^tf(\zeta')
    = \sum_{\eta\in\widetilde\pi_i^{-1}(\zeta')}f\circ G_g^t(\eta) 
    =\sum_{h_i\in H_i} f\circ G_g^t(\widetilde T_{h_i}\eta),
\end{equation*}
and the equality is independent of the choice of $\eta$ along the fibre of $\zeta'$ by transitivity. Furthermore, 
for any preimage $\zeta$ of $\zeta'$, the orbit of $\zeta$ is the preimage of the orbit of $\zeta'$, which gives  
\begin{align*}
V_{g_i}^t\widetilde{\pi}_{i*}f(\zeta') 
&=\sum_{h_i\in H_i}f \circ G_g^t (\widetilde{T}_{h_i}\eta') = \widetilde{\pi}_{i*}V_g^tf(\zeta').
\end{align*}
Combining the equalities and the fact that $\widetilde{T}_a$ commutes with the Koopman operator as the deck group transformations $G$ acts on the cotangent bundle as isometries to attain 
\begin{align*}
    \widetilde{U}_AV_{g_1}^t 
&= \frac{1}{\# H_1}\sum_{a \in G }A(a)\widetilde{\pi}_{2*}\widetilde{T}_aV_g^t\widetilde{\pi}_1^*\\
&=\frac{1}{\# H_1}\sum_{a \in G }A(a)\widetilde{\pi}_{2*}V_g^t\widetilde{T}_a\widetilde{\pi}_1^*\\
&=\frac{1}{\#H_1}\sum_{a \in G }A(a)V_{g_2}^t\widetilde{\pi}_{2*}\widetilde{T}_a\widetilde{\pi}_1^*
=V_{g_2}^t\widetilde{U}_A. 
\end{align*}

\end{proof}

\section{\centering \sc{Appendix}}\label{appendix}

\subsection*{\normalsize \textbf{$\delta$-distributions}.}\label{deltadensity} The goal of this section is to describe the Schwartz kernel of the Koopman operator and its pullback $\Delta^*k^\bullet$ as distributions. We refer to \cite{G77}, \cite{GuSc90} for further background. Let us first recall the general settings of the so-called $\delta$-distributions. 

Let $X$ be a manifold. Denote $|T|X$ to be the density bundle of $X$ whose fibre elements at a point $x\in X$ are the volume densities on $X$. Let $L$ be a line bundle of $X$ and $L^*$ be its dual. A \textit{generalized section} of the line bundle $L \rightarrow X$ is a continuous linear functional on the space of $C^\infty$-sections of $L^*\otimes |T| X$. These are volume forms with compact support in $X$ taking values in the co-line bundle $L^*$. In particular, if $L= \R$, generalized sections of $L$ are \textit{generalized functions} on $X$. Namely, these are elements in $\textup{Hom}(C_c^\infty(|T|X), \R)$. If $L = |T|X $, generalized sections are now \textit{generalized densities} on $X$ or elements of  $\textup{Hom}(C_c^\infty(X), \R)$ as $L^* \otimes L$ is canonically trivial. 

Now let $Z$ be a closed submanifold of $X$, the conormal bundle $N^*Z$ is given by 
\[
N^*Z = \{(\zeta, \alpha)\in Z \times T^*S^*_gM \big| \alpha \in T^* : \alpha(v) = 0, \forall v \in T_\zeta Z \}.
\]
Given a vector bundle $E\rightarrow X$, a \textit{$\delta$-section} along $Z$ is a smooth generalized section $u$ of the tensor bundle
\[
E\big|_{Z} \otimes |N^*Z|\]
such that 
\begin{equation}\label{pairing}
    \langle u, \psi \rangle = \int_{Z}u \psi\big|_{Z}
\end{equation}
for any section $\psi$  of $E^* \otimes |T|X$. Let $E = |T|X$.
We have the short exact sequence 
\begin{equation}\label{shortexact}
    0 \rightarrow T_\zeta Z \rightarrow T_\zeta X \rightarrow T_\zeta X/T_\zeta Z \rightarrow 0.  
\end{equation}
Since $|T|\big|_{Z}X \otimes |N^*Z| \cong |T|Z$, the pairing in \eqref{pairing} implies that $u$ is a generalized density on $X$ with support in $Z$ taking values in the space of densities on $Z$.  

\medskip

Next we recall pull-backs of $\delta$-sections under the condition of transversality. Let $Y$ be another manifold and $F \rightarrow Y$ a vector bundle. Consider a smooth map $h : X \rightarrow Y$. Let $W$ be a submanifold of $Y$ defined as a zero-set of a global chart on $Y$. If $h \pitchfork W$, it follows that $h^{-1}W$ is a submanifold of $X$ and that $dh : T_xX \rightarrow T_yY $ maps $ T_xh^{-1}(W)$ onto $T_yW$, which induces the mapping
\begin{equation}\label{isomconormal}
   N^*_x h^{-1}W \rightarrow N^*_y W ;~ \beta \mapsto dh^*\beta.
\end{equation}
If we let $\sigma$ to be a $\delta$-section along $W$, that is,  a generalized section of $F\big|_W \otimes |N^*W|$, and $r(y): F_y \rightarrow E_x$ be a fiber map, then 
 $r \otimes dh^*|\wedge| : F_y \otimes |N^*W| \rightarrow E_x \otimes |N_x^*h^{-1}W|$  maps $\sigma$ to its pullback $h^*\sigma$, which is a $\delta$-section along $h^{-1}W$.

\medskip
 
Let $p_1, p_2$ be the projections of $X\times Y$ to $X$ and respectively, to $Y$. 
Let $k = k(x,y)dy$ be a generalized density  of $p_2^*|T|Y$, that is a linear functional on $C_c^\infty Y$. We obtain from $k$ a linear operator 
\[
K a = p_{1*} p_2^* \langle a, k \rangle = p_{1*}  \langle a, p_2^* k \rangle \in C^\infty X ; ~~~ a\in C_c^\infty Y.
\]

\begin{theorem*}[Schwartz kernel theorem]
Every continuous linear operator is of this form.
\end{theorem*}

In particular, if $f : X \rightarrow Y$ is a smooth map, the composition operator $f^*$ is  
\[f^* a(x) = a (f(x)) = \int_{Y}\delta(y - f(x))a(y) dy.  \]
$k= \delta (y - f(x))dy$ is then referred to as a  \textit{$\delta$-distribution} supported on the graph of $f$.  

\medskip

\textbf{Remark. } This description of $k$ is consistent with the definition of $k$ as a $\delta$-section of 
\[ p_2^*|T|\big|_{\textup{Graph}(f)} Y \otimes |N^*\textup{Graph}(f)|. \]
A 1-form $\beta \in T_y^*$ pullbacks under $p_2$, which satisfies the condition
\[\beta_y + (-df_x)^*\beta_y= 0\]
induced by the defining equation $y = f(x)$ for the submanifold $\textup{Graph}(f) \subset X \times Y$. Thus, if $(\alpha_x, \beta_y) \in p_2^*T^*\big|_{ \textup{Graph}(f)} Y$, it is necessary that $\alpha_x = (-df_x)^* \beta_y$. This is precisely the condition for $(\alpha, \beta)$ to be an element in $N^*\textup{Graph}(f)$, which is

\begin{equation*}
    (\alpha ~ \beta) \cdot \begin{pmatrix} V_x \\ df_x (V) \end{pmatrix}  = \alpha(V) + \beta df_x (V) = 0.
\end{equation*}
Thereby, 
\[ p_2^*|T|\big|_{\textup{Graph}(f)}Y \otimes |N^*\textup{Graph}(f)| \cong 1,\] which implies that $k$ is the canonically trivial distribution on $X\times Y$ supported on $\textup{Graph}(f)$ taking values in the line of densities on $\textup{Graph}(f)$.

\medskip

Now let $X = Y = S_g^*M$ and $p_2 = \rho (\eta, \xi ,t) \mapsto \xi$.

\begin{equation*}
\begin{tikzcd}[row sep=huge]
\rho^*T^*S^*_gM  \arrow[d,""] & T^*S^*_gM \arrow[d,swap,""]
\\
S^*_gM \times S^*_gM \times \R^+ \arrow[r,"\rho"] & S^*_gM 
\end{tikzcd}
\end{equation*}
From the previous paragraphs, the Schwartz kernel of $V^t_g$ is the $\delta$-distribution - that is the $\delta$-section of the tensor bundle 
\[\rho^*|T|\big|_{\textup{Graph}(G^t_g)}S^*_gM \otimes \lvert N^* \textup{Graph}(G^t_g)\rvert\]
given by 
\[k^t = k^t(\xi, \eta)d\mu_L(\eta)d\mu_L(\xi) = \delta(\eta - G^t_g \xi) d\mu_L(\eta)d\mu_L(\xi).\]
Under transversality, $\Delta^* k^\bullet$ is a $\delta$-section of the tensor bundle 
\[\Delta^*\rho^*|T|\big|_{\textup{Fix}(G^\bullet_g)}S^*_gM \otimes \lvert N^* \textup{Fix}(G^\bullet_g)\rvert.\]
Indeed, we first note that $\Delta^{-1}\textup{Graph}(G^\bullet_g) =\textup{Fix}(G^\bullet_g) = \{(\zeta, \tau) : \zeta \in \textup{Fix}(G_g^\tau), \tau \in \textup{Lsp}(S_g^*M)\}$.
Similar to the the discussion leading up to \eqref{isomconormal}, letting $h = \Delta$ yields the canonical isomorphism 
\[N^*\gamma \cong \Delta^* \rho^* T^* \big|_{\gamma}S_g^*M \]
if $\gamma$ is a non-degenerate $\tau$-periodic orbit of $G^t_g$ whereby $\Delta\big|_\gamma \pitchfork \textup{Graph}G^\tau_g$ . More generally,

\[N^*Z^{(\tau)}_j \cong \Delta^* \rho^* T^* \big|_{Z^{(\tau)}_j} S_g^*M\]
if $Z^{(\tau)}_j$ is a clean $k$-dimensional component of $\textup{Fix} (G^{\tau}_g)$. To see this, notice for any 1-form $(\alpha, \beta)$ in $\rho ^*T^* S^*_gM$, we have 
\[\Delta_*(\alpha,\beta) = (d \Delta)^* (\alpha, \beta) = \alpha + \beta.\]
Hence, since $dG^\tau_g$ fixes the tangent space along $\gamma$, 
\[
    \Delta^* ((-dG_g^{\tau})^*\beta, \beta) = (I - dG^\tau_g)^* \beta 
\]
is a 1-form in $(\Delta^*\rho^* T^*\big|_{Z^{(\tau)}_j} S^*_gM)_\zeta$, which vanishes on $T_\zeta Z_j^{(\tau)}$ under cleanness, hence is in the conormal space $N_{\zeta}^*Z_j^{(\tau)}$.  Now let $\alpha_\zeta \in N_{\zeta}^*Z_j^{(\tau)}$, it is obvious that $\alpha\big|_{T_\zeta Z_j^{(\tau)}} = (I - dG^\tau_g)^*\alpha_\zeta$. Suppose $J \in \ker(\alpha_{\zeta}) - T_{\zeta}Z_j^{(\tau)}$, since $(I - dG^\tau_g)^\#$ is invertible on $T_\zeta S^*M/ T_{\zeta}Z_j^{(\tau)}$, there exists $S$ in the cokernel of $I - dG^\tau_g$ so that 
\[\alpha_\zeta(J) = (I - dG^\tau_g)^*\alpha_\zeta(S).\]
This implies $\alpha = (I - dG^\tau_g)^*\beta$ for some $\beta \in (\Delta^*\rho^*T^*)_\zeta$. We thus obtain an isomorphism of $(\Delta^*\rho^*T^*|_{Z^{(\tau)}_j})_{\zeta}$ with $N_{\zeta}^*Z_j^{(\tau)}$. Consequently,
\[|N^*Z_j^{(\tau)}| \otimes \Delta^*\rho^*|T|\big|_{Z^{(\tau)}_j} = 1.\]
Moreover, since $d\Delta_\zeta +  (I_\zeta,(dG^\tau_g)_\zeta^\#)$ is full-rank, this transversal condition implies that  $k^\tau$ pulls back to the canonical $\delta$-distribution $\Delta^*k^\tau$ on $S_g^*M$, which is supported on $Z_j^{(\tau)}$ taking values to densities on $Z_j^{(\tau)}$ (cf. [\cite{G77}, Theorem 6]). In both cases of Lefschetz and clean flows, 
\[\Delta ^*k^\bullet\big|_{(\zeta, \tau)} = \delta(\zeta - G_g^\tau\zeta) d\mu_L(\zeta) \cdot  \delta(t-\tau)dt.\]

\medskip

\medskip

\begin{noindent}
Northwestern University
\end{noindent}

\begin{noindent}
Department of Mathematics
\end{noindent}

\begin{noindent}
2033 Sheridan Road, Evanston, Illinois 60208 USA
\end{noindent}

\medskip

\begin{noindent}
\textit{E-mail address}: \texttt{hylam2023@u.northwestern.edu}
\end{noindent}

 \end{document}